\documentclass[11pt,a4paper]{article}
\usepackage[utf8]{inputenc}
\usepackage[T1]{fontenc}
\usepackage{amsmath, amssymb, amsthm, mathrsfs}
\usepackage{graphicx}
\usepackage[colorlinks=true, linkcolor=blue, citecolor=blue, urlcolor=blue]{hyperref}
\usepackage{tikz}
\usepackage{lipsum}
\usepackage{enumitem}
\usepackage{mathtools}
\usepackage{bm}
\usepackage{bbm}
\usepackage{stmaryrd}
\usepackage{dsfont}
\usepackage{float}
\usepackage{caption}
\usepackage{subcaption}
\usepackage{booktabs}
\usepackage{multirow}
\usepackage{algorithm}
\usepackage{algpseudocode}
\usepackage{color}
\usepackage{xcolor}

\theoremstyle{plain}
\newtheorem{theorem}{Theorem}[section]
\newtheorem{lemma}[theorem]{Lemma}
\newtheorem{proposition}[theorem]{Proposition}
\newtheorem{corollary}[theorem]{Corollary}
\newtheorem{remark}[theorem]{Remark}
\newtheorem{definition}[theorem]{Definition}
\newcommand{\R}{\mathbb R}
\theoremstyle{definition}

\title{Non-linear Stegall's lemma and general Hamilton-Jacobi-Bellman equations on Wasserstein spaces}
\author{Charles Bertucci$^1$, Pierre-Louis Lions$^{1,2}$}
\date{}
\begin{document}

\maketitle

\begin{abstract}
We present a comparison principle for unbounded viscosity solutions to Hamilton-Jacobi equations on Wasserstein spaces of probability measures over $\mathbb{R}^d$. In addition to the use of standard techniques of viscosity solutions, our approach requires a key extension on Wasserstein spaces of a result of perturbed optimization on Banach spaces due to Stegall.
\end{abstract}

\section{Introduction}

Motivated by the study of a problem of large deviations of random matrices in an ongoing work \cite{ref1} with P.E. Souganidis, we present in this article a comparison principle for viscosity sub-super solutions to Hamilton-Jacobi-Bellman (HJB) equations on Wasserstein spaces.

The difficulties arising from the growth of the solutions on the usual proof of the comparison principle for HJB equations (in finite dimension) has been studied since the seminal work of Ishii \cite{ref2}, which followed the development of the theory of viscosity solutions by Crandall and the second author \cite{ref3}. A key argument was presented by Alvarez in \cite{ref4} for first order equations with a convex Hamiltonian, in finite dimension. Namely, he showed that when the Hamiltonian is convex in the gradient argument, we can treat arbitrary growth at $+\infty$ for the solutions, as long as they stay bounded from below for instance. It turns out that the equation at interest in \cite{ref1} has a quadratic Hamiltonian, and thus is well suited for this type of argument. This motivated us to adapt to this infinite-dimensional setting the arguments of \cite{ref4}.

Furthermore, as shall be apparent later on, to establish comparison principles for HJB  equations on non-compact sets, a perturbed optimization result is needed. In the theory of viscosity solutions on Banach spaces \cite{crandalllions}, the so-called Stegall's Lemma \cite{stegall} has proven to be of great practical use. This result is stated on Banach spaces. We explain in Section \ref{sec:stegall} how to extend this result to Wasserstein spaces, at the price of losing the linearity of the perturbation, which we show is not needed in our problem. Prior to that, the equations and main definitions are given in Section \ref{sec:prel}. We then present the equivalent of the result of Alvarez in Section \ref{sec:min} and conclude to obtain our comparison principle in Section \ref{sec:comp}.

We also mention that since the existence issue, for the equations we are about to treat, are becoming more and more standard, we do not investigate such questions here, and refer to results of \cite{bertucci2023stochastic} for results of existence of viscosity solutions through optimal control formulation.

\subsection*{Bibliographical comments}

HJB equations on spaces of measures have received a lot of attention recently. For equations which do not involve singular first order terms, the first author proposed in \cite{bertucci2023stochastic} a notion of viscosity solution when the equation is set on the space of probability measures on the torus, see also \cite{ref10} for generalizations to manifolds. In general, the difficulty arising from the lack of local compactness has been avoided up to now by adding additional regularity on the solutions, which in fact restores some local compactness, see \cite{ref11} for instance, and thus the problem at hand is in fact locally compact. Here, we adopt a different approach, motivated by cases (once again see \cite{ref1}) in which extra-regularity assumptions are not realistic nor possible.

For equations involving singular first order terms, we refer the reader to \cite{ref11,ref12}, as we shall not investigate such cases here.

\subsection*{Notation}

In all what follows, $p \in (1, \infty)$ and $p' = \frac{p}{p-1}$. The set of Borel probability measures with moment of order $p$ is denoted by $\mathcal{P}_p(\mathbb{R}^d)$. For $i \in \mathbb{N}^*$, $\pi_i$ is the projection of a tuple onto its $i$-th coordinate, i.e. $\pi_i(x_1,\dots,x_n)= x_i$. The push-forward of a measure $\mu$ by a map $T$ is denoted by $T_\# \mu$. For $\gamma \in \mathcal{P}_p(\mathbb{R}^d \times \mathbb{R}^d)$, we note 
$$
C_p(\gamma) = \left(\int_{\mathbb{R}^d \times \mathbb{R}^d} |x-y|^p \gamma(dx, dy)\right)^\frac1p.
$$
 For $\mu \in \mathcal{P}_p(\mathbb{R}^d)$, we note $M_p(\mu) = \int_{\mathbb{R}^d} |x|^p \, \mu(dx)$. We also define $\mathcal{P}_{pp'}(\mathbb{R}^d \times \mathbb{R}^d) := \{\gamma \in \mathcal{P}(\mathbb{R}^d \times \mathbb{R}^d), (\pi_1)_\# \gamma \in \mathcal{P}_p, (\pi_2)_\# \gamma \in \mathcal{P}_{p'}(\mathbb{R}^d)\}$. The set $\mathcal P_{pp'pp}(\R^{4d})$ is defined similarly. Given two probability measures $\mu$ and $\nu$ on respectively $X$ and $Y$, the set of couplings between them, that is of probability measures $\gamma$ on $X \times Y$ such that $(\pi_1)_\# \gamma = \mu$ and $(\pi_2)_\# \gamma = \nu$, is denoted by $\Pi(\mu, \nu)$. When endowed with the metric $\mathcal{W}_p$ defined by:

\[
\mathcal{W}_p(\mu, \nu) = \inf_{\gamma \in \Pi(\mu, \nu)} \left( \int_{\mathbb{R}^d \times \mathbb{R}^d} |x-y|^p \, \gamma(dx, dy) \right)^{1/p},
\]
the space $\mathcal P_p(\R^d)$ is a complete separable metric space called the $p$-Wasserstein space.
We shall also consider the quantity $\mathcal I_p$ defined on $\mathcal P_{p'}(\mathbb R^d)\times \mathcal P_p(\mathbb R^d)$ by
$$
\mathcal I_p(\xi,\mu) = \inf_{\gamma \in \Pi(\xi,\mu)}- \int_{\mathbb R^d\times \mathbb R^d}x\cdot y\,\gamma(dx,dy).
$$
We fix an atomless Polish probability space $(\Omega, \mathcal{A}, \mathcal{P})$ and we denote by $\mathbb L^p$ the set of $\mathbb{R}^d$ valued random variables on $\Omega$ of finite $p$-th order moment. It is a Banach space equipped with the norm $\|X\|_p = \left( \mathbb{E}[|X|^p] \right)^{1/p}$ where $|x|$ denotes the Euclidean norm of $x \in \mathbb{R}^d$. The law of a random variable $X$ is denoted by $\mathcal L(X)$.
Upper semi continuous and lower semi continuous shall be abbreviated in u.s.c. and l.s.c. . A point of a maximum of a given function is strongly exposed if all maximizing sequences converge toward this point. A function defined on a metric space is locally bounded from above if it is bounded from above on every bounded set. For a locally bounded function $u$, $u^*$ and $u_*$ denote respectively its upper and lower semi continuous enveloppes.

A modulus of continuity is a concave increasing function $\omega: \R_+ \mapsto \R_+$ such that $\omega(0) = 0$.

\section{Preliminary results and definitions}\label{sec:prel}

Let $H: \mathbb{R}^d \times \mathbb{R}^d \times \mathcal{P}_p(\mathbb{R}^d) \mapsto \mathbb{R}$. We are interested in the equation

\begin{equation}\label{hjb1}
U(\mu) + \int_{\mathbb{R}^d} H(x, D_\mu U(\mu)(x), \mu) \, \mu(dx) = 0 \quad \text{in } \mathcal{P}_p(\mathbb{R}^d),
\end{equation}
of unknown $U: \mathcal{P}_p(\mathbb{R}^d) \mapsto \mathbb{R}$. The derivative $D_\mu U$ refers to the horizontal, geometric, Lions or Wasserstein derivative on $\mathcal{P}_p(\mathbb{R}^d)$, which is a notion of derivative along Lagrangian paths in $\mathcal{P}_p(\mathbb{R}^d)$. More precisely, we say that $U$ is differentiable at $\mu_0 \in \mathcal{P}_p(\mathbb{R}^d)$ if there exists $\phi \in L_{\mu_0}^p:= L^p((\mathbb{R}^d,\mu_0),\mathbb{R}^d)$, such that for any $\mu \in \mathcal{P}_p(\mathbb{R}^d)$, any $\gamma \in \Pi(\mu, \mu_0)$

\[
\lim_{C_p(\gamma)\to 0} \frac{U(\mu) - U(\mu_0) - \int_{\mathbb{R}^d \times \mathbb{R}^d} \phi(x) \cdot (y - x) \, \gamma(dx, dy)}{C_p(\gamma)} = 0.
\]
In such a case, we note $\phi = D_\mu U(\mu_0)$, see \cite{refbook} for more details.

We also recall the appropriate definition of super-differentials in this case. For an u.s.c. $U: \mathcal{P}_p(\mathbb{R}^d) \mapsto \mathbb{R}$, we say that $\gamma \in \mathcal{P}_{pp'}(\mathbb{R}^d \times \mathbb{R}^d)$ belongs to the super-differential of $U$ at $\mu_0 \in \mathcal{P}_p(\mathbb{R}^d)$ if $\pi^1_\# \gamma = \mu_0$ and there exists a modulus of continuity $\omega$ such that, for any $\mu \in \mathcal{P}_p(\mathbb{R}^d)$, any $\Gamma \in \Pi(\gamma, \mu)$,

\[
U(\mu) - U(\mu_0) - \int_{(\mathbb{R}^d)^2} z \cdot (y - x) \, \Gamma(dx, dz,dy) \leq C_p((\pi_1,\pi_3)_\#\Gamma)\omega(C_p((\pi_1,\pi_3)_\#\Gamma)).
\]
In this case, we note $\gamma \in \partial^+ U(\mu_0)$. Sub-differentials are defined analogously as usual through $\gamma \in \partial^- U(\mu_0) \Leftrightarrow (Id, -Id)_\# \gamma \in \partial^+ (-U)(\mu_0)$.

Sub and super differentials on Wasserstein spaces are linked to the usual notion at the level of the lift as the next simple result explains.

\begin{proposition}\label{prop1}
Let $U: \mathcal{P}_p(\mathbb{R}^d) \mapsto \mathbb{R}$ and define $\mathcal{U}: \mathbb L^p \mapsto \mathbb{R}$ by
\[
\mathcal{U}(X) = U(\mathcal{L}(X)).
\]
Then,
\begin{enumerate}
\item If $\gamma \in \partial^+ U(\mu_0)$, then for any $(X, Z)$ of law $\gamma$,
\[
Z \in \partial^+ \mathcal{U}(X).
\]
\item If $Z \in \partial^+ \mathcal{U}(X)$, then $\mathcal{L}(X, Z) \in \partial^+ U(\mathcal{L}(X))$.
\end{enumerate}
\end{proposition}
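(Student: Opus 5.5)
Both items will be obtained by translating couplings into random variables on $\Omega$ and back. We use that $(\Omega,\mathcal A,\mathcal P)$ is atomless and Polish, and that the super-differential of $\mathcal U$ on $\mathbb L^p$ is the usual Fréchet one: $Z\in \mathbb L^{p'}$ belongs to $\partial^+\mathcal U(X)$ if
\[
\mathcal U(Y)-\mathcal U(X)-\mathbb E[Z\cdot(Y-X)]\le \|Y-X\|_p\,\omega(\|Y-X\|_p)
\]
for a modulus $\omega$.

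\textbf{Item 1.} Let $\gamma\in\partial^+U(\mu_0)$ with modulus $\omega$, and let $(X,Z)$ have law $\gamma$. Then $X\in\mathbb L^p$ and $Z\in\mathbb L^{p'}$, since $\gamma\in\mathcal P_{pp'}$.
\begin{itemize}
\item Take any $Y\in\mathbb L^p$ and set $\Gamma=\mathcal L(X,Z,Y)$, with $\mu=\mathcal L(Y)$.
\item Then $\Gamma\in\Pi(\gamma,\mu)$, in the sense that $(\pi_1,\pi_2)_\#\Gamma=\gamma$ and $(\pi_3)_\#\Gamma=\mu$.
\item We have $C_p((\pi_1,\pi_3)_\#\Gamma)=\|Y-X\|_p$ and $\int z\cdot(y-x)\,d\Gamma=\mathbb E[Z\cdot(Y-X)]$.
\end{itemize}
The defining inequality of $\partial^+U(\mu_0)$, applied to this $\Gamma$, is then exactly the Fréchet super-differential inequality for $\mathcal U$ at $X$, with the same modulus $\omega$. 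This direction is purely a rewriting.

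\textbf{Item 2.} Suppose $Z\in\partial^+\mathcal U(X)$ with modulus $\omega$, and set $\gamma=\mathcal L(X,Z)$; its first marginal is $\mathcal L(X)$. Given $\mu$ and $\Gamma\in\Pi(\gamma,\mu)$, the task is to realize $\Gamma$ on $\Omega$ with the first two coordinates equal to the *given* pair $(X,Z)$. That is, we need $Y\in\mathbb L^p$ with $\mathcal L(X,Z,Y)=\Gamma$. Once such a $Y$ exists, the inequality for $\mathcal U$ translates back verbatim into the inequality defining $\partial^+U(\mathcal L(X))$.

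\textbf{Main obstacle: constructing $Y$.} Such a $Y$ need not exist in general, for instance if $(X,Z)$ generates the whole $\sigma$-algebra $\mathcal A$; then no extra randomness is available. I would handle this in two steps.

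First, I would show that $\mathcal U(X)$ depends only on the law of $X$, so $\partial^+\mathcal U$ is compatible with measure-preserving rearrangements of $\Omega$. Then I would approximate. Disintegrate $\Gamma(dx,dz,dy)=\gamma(dx,dz)\,k_{x,z}(dy)$. Using atomlessness, build a sequence $Y_n$ such that $\mathcal L(X_n,Z_n,Y_n)\to\Gamma$ in the appropriate $p,p',p$-Wasserstein sense, where $(X_n,Z_n)$ is equal in law to $(X,Z)$ and $(X_n,Z_n)\to(X,Z)$. A standard way to do this is to discretize $(x,z)$ on finitely many cells, split each cell of positive probability into pieces, and use the conditional laws.

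Second, I would pass to the limit in the inequality. This needs $Z_n\in\partial^+\mathcal U(X_n)$ with the same modulus, which holds by the rearrangement invariance: $X_n=X\circ\tau_n$ and $Z_n=Z\circ\tau_n$ for measure-preserving bijections $\tau_n$ when $\Omega$ is standard. I also need continuity of $U(\mu)-\int z\cdot(y-x)\,d\Gamma$ and of the right-hand side along the approximation. The left side is only u.s.c., but $U(\mu)$ is fixed, since $\mathcal L(Y_n)=\mu$. The linear term converges because $p,p'$ moments converge with uniform integrability, and $C_p$ converges. So the inequality holds for $\Gamma$ itself. This approximation step is where I expect the real work to be.
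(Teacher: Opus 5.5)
Item 1 is exactly the rewriting the paper has in mind when it calls the proof immediate. You are also right that item 2 is not a pure rewriting: the direct translation needs some $Y\in\mathbb L^p$ with $\mathcal L(X,Z,Y)=\Gamma$ for the \emph{given} pair $(X,Z)$. Such a $Y$ need not exist, for instance when $\sigma(X,Z)=\mathcal A$ and $\Gamma$ is not carried by a graph over $(x,z)$. The paper passes over this point.

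The gap is in your repair. You assert that the approximating pairs, being equal in law to $(X,Z)$, have the form $(X,Z)\circ\tau_n$ for measure-preserving bijections $\tau_n$, and you use this to get $Z_n\in\partial^+\mathcal U(X_n)$ with the same modulus. Equality in law does not give an exact rearrangement, even on a standard space. On $\Omega=[0,1]$ with Lebesgue measure (take $Z=0$), $X(\omega)=\omega$ and $X'(\omega)=2\omega \bmod 1$ are both uniform, yet $X'=X\circ\tau$ forces $\tau=X'$, which is not injective. In general only approximate rearrangements exist: for each $\epsilon>0$ there is some $\tau$ with $\|(X',Z')-(X,Z)\circ\tau\|_\infty\le\epsilon$. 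That is a nontrivial lemma which you neither state nor use, and the construction of $(X_n,Z_n,Y_n)$ itself is left open. Your route can be rescued with that lemma: realize $\Gamma$ by some $(X',Z',Y')$ on $\Omega$, keep $Y'$ fixed, and let $(X,Z)\circ\tau_n\to(X',Z')$ uniformly. As written, though, the step fails.

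In fact you do not need to move $(X,Z)$ at all; only $Y$ has to be approximated. Partition $\R^{2d}$ into countably many Borel cells $C_i$ of diameter at most $1/n$. For $\gamma(C_i)>0$, set $\Omega_i=\{(X,Z)\in C_i\}$ and $\mu_i=\Gamma(C_i\times\cdot)/\gamma(C_i)$. Each $\Omega_i$ with $\mathcal P(\cdot\mid\Omega_i)$ is atomless, so it carries a random variable of law $\mu_i$. This variable may be a function of $(X,Z)$; no independent randomness is required. Gluing these gives $Y_n$ with $\mathcal L(Y_n)=\mu$ exactly. For bounded $L$-Lipschitz $f$ one checks $|\int f\,d\mathcal L(X,Z,Y_n)-\int f\,d\Gamma|\le 2L/n$. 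All three marginals are fixed, and $z\cdot(y-x)$ and $|y-x|^p$ are dominated by $C(|x|^p+|z|^{p'}+|y|^p)$, so they are uniformly integrable along the sequence. Hence $\mathbb E[Z\cdot(Y_n-X)]\to\int z\cdot(y-x)\,d\Gamma$ and $\|Y_n-X\|_p\to C_p(\tilde\gamma)$. The $\mathbb L^p$ inequality at $X$, tested with $Y_n$, has the fixed value $U(\mu)$ on its left side, so it passes to the limit. A concave $\omega$ is continuous on $(0,\infty)$, and the case $C_p(\tilde\gamma)=0$ is trivial, so item 2 follows with the same modulus.

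Finally, the global-modulus inequality you wrote is stronger than the usual local Fr\'echet super-differential, and item 2 genuinely needs it. A concave modulus allows at most quadratic growth in $C_p$. So for $p=2$ and $U(\mu)=M_2(\mu)^2$, one has $0\in\partial^+\mathcal U(0)$ in the local sense, but $\delta_{(0,0)}\notin\partial^+U(\delta_0)$.
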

The proof is immediate. We can now state the notion of viscosity solution we shall work with.
\begin{definition}
An u.s.c. function $U: \mathcal{P}_p(\mathbb{R}^d) \mapsto \mathbb{R}$ is a viscosity sub-solution (resp. super-solution) to \eqref{hjb1} if, for any $\gamma \in \partial^+ U(\mu)$ (resp. $\gamma \in \partial^- U(\mu)$),
\[
U(\mu) + \int_{\mathbb{R}^d \times \mathbb{R}^d} H(x, y, \mu) \, \gamma(dx, dy) \leq 0. \text{( resp. } \geq 0 \text{)}.
\]
A locally bounded function $U$ is a viscosity solution to \eqref{hjb1} if $U^*$ is a sub-solution and $U_*$ is a super-solution.
\end{definition}

\begin{remark}
From this definition, it follows from Proposition \ref{prop1} that $U$ is a viscosity solution (resp. super-solution, resp. sub-solution) to \eqref{hjb1} if and only if the lift $ \mathcal{U}$ of  $U$is a viscosity solution (resp. super-solution, resp. subsolution) to the HJB equation
\[
\mathcal{U}(X) + \mathbb{E}[H(X, \nabla \mathcal{U}(X), \mathcal{L}(X))] = 0 \quad \text{in } \mathbb L^p.
\]
\end{remark}
Somehow the notion of super-differentials on Wasserstein spaces is inherited from the one at the level of the lift. A similar notion of convexity can be pulled down, and we shall way that a set $A \subset \mathcal P_p(\mathbb R^d)$ is coupling convex if, for any $\mu,\nu \in A, \theta \in [0,1]$ and $\gamma \in \Pi(\mu,\nu)$, then $((1-\theta)\pi_1 + \theta \pi_2)_\#\gamma \in A$. This notion has the obvious property: if $A$ is coupling convex, then $\{X \in \mathbb L^p, \mathcal L(X) \in A\}$ is convex in $\mathbb L^p$.

In this paper, we are going to stay as close as possible to the formalism on $\mathcal{P}_p(\mathbb{R}^d)$ rather than working on $\mathbb L^p$, namely because a main motivation for this work is a more involved case in which this lift is not natural \cite{ref1}.

As mentioned above, the main objective of this paper is to establish the uniqueness of bounded from below viscosity solutions to \eqref{hjb1}, by means of a comparison result argument. In particular, we would like to be able to consider points of maximum of

\[
W: (\mu, \nu) \mapsto U(\mu) - V(\nu) - \frac{1}{p \varepsilon} \mathcal{W}_p^p(\mu, \nu),
\]
when $U$ is a viscosity sub-solution and $V$ is a viscosity super-solution. Clearly $W$ is not bounded a priori, and even if it was, there is no reason why it should reach its maximum. The aim of Sections 3 and 5 is to show how we can perturb the map $W$ to obtain such properties.

\section{Non-linear variants of Stegall's lemma on spaces of probability measures}\label{sec:stegall}

The aim of this section is to prove and comment the following result.

\begin{theorem}\label{thm:stegallnon}
Let $\mathcal O$ be a closed convex subset of $\mathbb{R}^d$ and $K$ a closed, coupling convex, bounded subset of $\mathcal{P}_p(\mathcal O)$. Then, for every upper bounded u.s.c. function $f: K \mapsto \mathbb{R}$, $\varepsilon > 0$, there exists $\xi \in \mathcal{P}_{p'}(\mathbb{R}^d)$ such that $\mu \mapsto f(\mu) - \mathcal I_p(\xi, \mu)$ has a strongly exposed point of maximum on $K$, and $M_{p'}(\xi) \leq \varepsilon$.
\end{theorem}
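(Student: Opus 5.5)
We reduce the statement to the classical Stegall lemma on the Banach space $\mathbb L^p$ via the lift, and then push the conclusion back down to $\mathcal P_p$.

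\textbf{Lift.} Set $\mathcal K := \{X \in \mathbb L^p,\ \mathcal L(X) \in K\}$ and $F(X) := f(\mathcal L(X))$ on $\mathcal K$. Then:
\begin{itemize}
\item $\mathcal K$ is convex, because $K$ is coupling convex.
\item $\mathcal K$ is closed, because $X_n \to X$ in $\mathbb L^p$ implies $\mathcal W_p(\mathcal L(X_n),\mathcal L(X)) \le \|X_n - X\|_p$ and $K$ is closed.
\item $\mathcal K$ is bounded, because $K$ is bounded in $\mathcal P_p$, so the $p$-th moments are uniformly bounded.
\item $F$ is u.s.c. and bounded above, since $\mathcal L$ is $1$-Lipschitz.
\end{itemize}
The space $\mathbb L^p$ with $1<p<\infty$ is reflexive, hence has the Radon--Nikodym property. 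Stegall's lemma therefore applies to the l.s.c. function $-F$ on the closed bounded convex set $\mathcal K$. It gives $Y \in (\mathbb L^p)^* = \mathbb L^{p'}$ with $\|Y\|_{p'}^{p'} \le \varepsilon$ such that $X \mapsto F(X) + \mathbb E[X\cdot Y]$ attains a strongly exposed maximum on $\mathcal K$ at some $\bar X$. (One can shrink the norm arbitrarily since Stegall gives a dense $G_\delta$ of such perturbations.) Put $\xi := \mathcal L(Y)$, so $M_{p'}(\xi) \le \varepsilon$, and $\bar\mu := \mathcal L(\bar X)$.

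\textbf{Descending to measures.} For any $X$ with $\mathcal L(X) = \mu \in K$ we have $\mathbb E[X\cdot Y] \le -\mathcal I_p(\xi,\mu)$, since $\mathcal L(Y,X) \in \Pi(\xi,\mu)$. Conversely, let $\gamma$ be an optimal coupling for $\mathcal I_p(\xi,\mu)$; it exists because the cost $-x\cdot y$ is controlled by $|x|^{p'}+|y|^p$, so the usual tightness argument works. Since $\Omega$ is atomless Polish, we want $X$ with $\mathcal L(Y,X)=\gamma$ for the \emph{fixed} $Y$. We obtain it either by a conditional-law (disintegration) construction or, if $Y$ generates too small a $\sigma$-algebra, by approximating with $\mathcal L(Y,X_n) \to \gamma$ in $\mathcal P_{p'p}$. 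In both cases
\[
\sup_{X:\mathcal L(X)=\mu} \mathbb E[X\cdot Y] = -\mathcal I_p(\xi,\mu).
\]
Hence
\[
\sup_{\mathcal K}\big(F + \mathbb E[\cdot\,\cdot Y]\big) = \sup_K\big(f - \mathcal I_p(\xi,\cdot)\big),
\]
and the value at $\bar X$ shows that $\bar\mu$ is a maximum point of $f - \mathcal I_p(\xi,\cdot)$ on $K$.

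\textbf{Strong exposure (main obstacle).} Let $\mu_n \in K$ be a maximizing sequence for $f - \mathcal I_p(\xi,\cdot)$. By the realization/approximation step, choose $X_n$ with $\mathcal L(X_n) = \mu_n$ and
\[
\mathbb E[X_n\cdot Y] \ge -\mathcal I_p(\xi,\mu_n) - 1/n.
\]
Then $(X_n)$ is maximizing for $F + \mathbb E[\cdot\,\cdot Y]$ on $\mathcal K$. Strong exposure in $\mathbb L^p$ gives $X_n \to \bar X$ in $\mathbb L^p$, hence $\mathcal W_p(\mu_n,\bar\mu) \le \|X_n - \bar X\|_p \to 0$.

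The delicate point is this realization step: producing, for the given $Y$, random variables $X$ such that $(Y,X)$ is (nearly) an optimal coupling. This is exactly where nonlinearity enters, because the perturbation $-\mathcal I_p(\xi,\cdot)$ is the law-invariant envelope of the linear functional. The step rests on the standard fact for atomless Polish spaces that for any $\gamma$ with first marginal $\mathcal L(Y)$ and any $\delta>0$ there is $X$ with $\mathcal L(Y,X)$ $\delta$-close to $\gamma$ in the relevant Wasserstein topology. This fact follows from measure-preserving rearrangement arguments. It suffices because $\gamma \mapsto \int x\cdot y\, d\gamma$ is continuous under convergence in $\mathcal P_{p'p}$ with converging moments.
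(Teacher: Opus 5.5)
Your proposal is correct and follows the paper's route: lift to $\mathbb L^p$, apply Stegall's lemma on the closed bounded convex set of lifts, and descend via $\sup_{\mathcal L(X)=\mu}\mathbb E[X\cdot Y]=-\mathcal I_p(\mathcal L(Y),\mu)$. Your choice $\xi=\mathcal L(Y)$ is the sign-consistent one, and you are more explicit than the paper about the realization step it uses tacitly (choosing $X_n\sim\mu_n$ with $(Y,X_n)$ nearly optimal).

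One small fix in that step: exact realization by disintegration fails when $Y$ generates too \emph{large} a $\sigma$-algebra (e.g.\ all of $\mathcal A$), not too small. Also, your approximating $X$ must have law exactly $\mu$ so that it lies in $\mathcal K$. Both points are handled by splitting $\mathcal L(Y)$ into its atoms, where you realize the conditional laws exactly on the atomless fibers $\{Y=y_i\}$, and its diffuse part, where you use the density of map-induced plans from an atomless source.
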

By definition, a strongly exposed point of maximum of $g:K\to \mathbb R$ is a point $x_*$ such that all sequences $(x_n)_{n \geq 0}$ such that $g(x_n)\to \sup_K g$ are such that $\lim_{n \to \infty} x_n = x_*$.
\begin{proof}
The proof relies on the usual version of Stegall's Lemma which is recalled below. Let $ E := \{X \in \mathbb L^p, \mathcal{L}(X) \in K\}$. Clearly, $E$ is convex closed and bounded in $\mathbb L^p$. Denote $\mathcal F: E \mapsto \mathbb{R}$ the lift of $f$. By applying Stegall's Lemma to $\mathcal F$ on $E$, we deduce that there exists $Y \in \mathbb L^{p'}$, $\|Y\|_{p'} \leq \varepsilon$, such that

\[
X \mapsto \mathcal F(X) + \mathbb{E}[Y \cdot X]
\]
has an exposed point of maximum on $E$, denoted by $X^*$. Denote now $\xi = \mathcal{L}(-Y)$. First, $M_{p'}(\xi) \leq \varepsilon$ holds by construction. For any $X \in E$, such that $\mathcal{L}(X) = \mathcal{L}(X^*)$, it follows that $\mathbb{E}[Y \cdot X^*] \geq \mathbb{E}[Y \cdot X]$. In particular $\mathcal{L}(-Y,X^*)$ is optimal for $\mathcal I_p(\mathcal{L}(X^*), \xi)$. Furthermore, because $X^*$ is strongly exposed, for any $\mu' \in K$, $\mu' \neq \mu^*:= \mathcal L(X^*)$,

\[
\sup_{\mathcal{L}(X') = \mu'} \mathcal F(X') + \mathbb{E}[Y \cdot X'] = f(\mu') - \mathcal I_p(\xi,\mu') < f(\mu^*) - \mathcal I_p(\xi,\mu^*).
\]
Hence $\mu^*$ is indeed a point of maximum. To show that it is strongly exposed and conclude, it is sufficient to observe that for a maximizing sequence $(\mu_n)_{n \geq 0}$, we can consider an associated sequence of random variables $(X_n)_{n \geq 0}$ such that $\forall n \geq 0$, $\mathcal{L}(X_n) = \mu_n$. Since $X^*$ is exposed, $\|X_n - X^*\|_p \to 0$ and thus $\mathcal{W}_p(\mu_n, \mu^*) \to 0$.
\end{proof}

Several comments are now in order.

\begin{enumerate}
\item The perturbation given by the theorem is not linear nor smooth. In fact it is not even differentiable in general. However it is always superdifferentiable, and the size of its superdifferential (or of its elements) is bounded by $\varepsilon$, which is sufficient for our needs.
\item In the rest of the paper, only the case $\mathcal O = \mathbb{R}^d$ will be of interest. However, we hope that future developments will make it easier to treat equations on, say, $\mathcal{P}(\mathbb{R}^+)$, which will then be direct cases of applications of our results.
\item Any closed ball of $\mathcal P_p(\mathbb R^d)$ is coupling convex, closed and bounded.
\end{enumerate}

We recall here Stegall's Lemma.

\begin{theorem}\label{thm:stegall}
Let $K$ be a closed convex bounded subset of a Banach space $X$ with the Radon-Nikodym property and consider $f: K \mapsto \mathbb{R}$ a lower bounded l.s.c. function. Then, for any $\varepsilon > 0$, $\exists x^* \in X^*$, $\|x^*\| \leq \varepsilon$, and the map $x \mapsto f(x) + \langle x^*, x \rangle$ has an exposed point of minimum on $K$.
\end{theorem}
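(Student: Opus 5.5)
This statement is Stegall's variational principle, which the paper only recalls; I would prove it along Stegall's original route, through the characterization of the Radon--Nikodym property by dentability and strongly exposed points. First I would reduce to the minimization form stated and extend $f$ by $+\infty$ outside $K$. Because $K$ is bounded and $f$ is bounded from below, the function $g=-f$ is bounded from above on $K$. The main object is then the closed convex hull of the hypograph-type set
\[
C_t=\{(x,s)\in K\times\mathbb R:\ s\le -f(x),\ s\ge t\}
\]
for a suitable truncation level $t$. Working directly with the epigraph is awkward because it is unbounded, so I would follow Stegall and use the convexified function $\hat f$. This is the largest l.s.c. convex function below $f$ on $K$. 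Its graph is bounded in $X\times\mathbb R$ after truncation, since $f$ is bounded below and $K$ is bounded.

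The key input is the Bourgain--Phelps/Stegall theorem. In a space with the Radon--Nikodym property, every closed bounded convex set $D$ is the closed convex hull of its strongly exposed points. Moreover, the set of functionals $x^*\in X^*$ that strongly expose $D$ is a dense $G_\delta$ in $X^*$. I would take this as the deep black box; proving it from dentability via Bourgain's slicing argument is the real obstacle. I would apply it to the bounded closed convex set $D=\overline{\mathrm{co}}\{(x,s): x\in K,\ f(x)\le s\le M\}$, for a large constant $M$. The functionals to use are of the form $(x^*,-1)$, since these read off minimizers of $x\mapsto f(x)+\langle x^*,x\rangle$.

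The density statement has to hold in the relevant slice of functionals, namely those whose last coordinate is fixed at $-1$. By a Baire argument in the complete space $X^*\times\{-1\}$, I would obtain $x^*$ with $\|x^*\|\le\varepsilon$ such that $(x^*,-1)$ strongly exposes $D$. Slicing $D$ with $(x^*,-1)$ selects points near the minimum of $\hat f+x^*$. Strong exposure means that the minimizing slices shrink to a single point $(x_0,s_0)$.

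The remaining step is to show that $x_0$ is a strongly exposed minimum of the original $f+x^*$, not just of the convexification. Since $(x_0,s_0)$ is strongly exposed, it lies in the closure of the generating set, i.e. it is a limit of points $(x_n,s_n)$ with $f(x_n)\le s_n$. Lower semicontinuity of $f$ then gives $f(x_0)\le s_0=\hat f(x_0)\le f(x_0)$. So $x_0$ minimizes $f+x^*$, because $f\ge\hat f$ everywhere. Finally, any minimizing sequence $x_n$ of $f+x^*$ gives points $(x_n,f(x_n))\in D$ in shrinking slices. Strong exposure then forces $x_n\to x_0$, which is exactly the required exposedness.
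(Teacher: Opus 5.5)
The paper gives no proof of this statement. It is recalled as a known result with a reference to Stegall, so your argument has to stand on its own, and it essentially does. You reduce the statement to Bourgain's theorem: the strongly exposing functionals of a closed bounded convex subset of an RNP space form a dense $G_\delta$. You apply it in $X\times\mathbb{R}$, which again has the RNP, to $D=\overline{\mathrm{co}}\,A$ with $A=\{(x,s):x\in K,\ f(x)\le s\le M\}$. This is a legitimate route, and your key observations are right:
\begin{enumerate}
\item a strongly exposed point of $\overline{\mathrm{co}}\,A$ lies in $\overline{A}$;
\item $A$ is closed because $K$ is closed and $f$ is l.s.c.;
\item minimizing sequences of the perturbed $f$ lift to maximizing sequences in $A$.
\end{enumerate}
What this buys is a short derivation that directly yields \emph{strong} exposedness, which is what the proof of Theorem~\ref{thm:stegallnon} actually uses. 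The price is that all the depth sits in the black box. In fact, the strong form of the statement applied to $f=\langle y^*,\cdot\rangle$ returns the density part of Bourgain's theorem, so your reduction is essentially an equivalence.

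Three points need tightening.

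First, the passage to functionals with last coordinate $-1$ is not a Baire argument in $X^*\times\{-1\}$. Density in $X^*\times\mathbb{R}$ does not imply density in an affine hyperplane: the complement of that hyperplane is itself a dense open set. Use instead that strong exposure is invariant under multiplication by positive scalars. Take a strongly exposing $(y^*,-t)$ with $\|y^*\|<\varepsilon/2$ and $|t-1|<1/2$, and set $x^*=y^*/t$.

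Second, $M$ must be fixed before invoking Bourgain, e.g.\ $M>f(\bar x)+2\varepsilon R+1$ with $\bar x\in K$ and $R=\sup_K\|x\|$. Then, for $\|x^*\|\le\varepsilon$, the truncation does not change the supremum, and every minimizing sequence eventually satisfies $f(x_n)\le M$, which your last step needs.

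Third, the envelope $\hat f$ is superfluous, and the identity $s_0=\hat f(x_0)$ would need its own proof. Argue directly instead: $f(x_0)\le s_0\le M$ gives $(x_0,f(x_0))\in A\subset D$. Its $(x^*,-1)$-value is at least that of $(x_0,s_0)$, so $s_0=f(x_0)$ by uniqueness of the exposed point.

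Also note that $(x^*,-1)$ produces minima of $f-x^*$, not $f+x^*$. This is harmless after replacing $x^*$ by $-x^*$.
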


We recall that a Banach space $X$ has the Radon-Nikodym property if every bounded set possesses slices of arbitrary small diameter. A slice of $K$ is a set of the form $\{ x \in K, \langle y,x\rangle \leq \alpha \}$ for $\alpha \in \mathbb R$ and $y \in X'$. Reflexive Banach spaces have the Radon-Nikodym property, see \cite{diestel}. A proof of Theorem \ref{thm:stegall} can be found in \cite{stegall}.

We also provide two variants of Theorem \ref{thm:stegallnon} that we shall use later on.

\begin{corollary}\label{cor1}
Let $\mathcal O$ be a closed convex subset of $\mathbb{R}^d$ and $K$ a closed, coupling convex, bounded subset of $\mathcal{P}_p(\mathcal O)$. Then, for every $\nu \in \mathcal P(\R^k)$, upper bounded u.s.c. function $f: \cup_{\mu \in K}\Pi(\nu,\mu) \mapsto \mathbb{R}$, $\varepsilon > 0$, there exists $\xi \in \mathcal{P}_{p'}(\mathbb{R}^d)$ such that $\mu \mapsto f(\mu) - \mathcal I_p(\xi, \mu)$ has a strongly exposed point of maximum on $\cup_{\mu \in K}\Pi(\nu,\mu)$, and $M_{p'}(\xi) \leq \varepsilon$.
\end{corollary}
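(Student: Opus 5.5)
The plan is to reduce the statement to Theorem \ref{thm:stegallnon} by first maximizing over the fibre of couplings. Throughout, $\cup_{\mu\in K}\Pi(\nu,\mu)$ carries the topology in which $\gamma_n\to\gamma$ means narrow convergence together with $\mathcal W_p$ convergence of the second marginals. I read $f(\mu)-\mathcal I_p(\xi,\mu)$ in the statement as $\gamma\mapsto f(\gamma)-\mathcal I_p(\xi,(\pi_2)_\#\gamma)$. Define
\[
g(\mu):=\sup_{\gamma\in\Pi(\nu,\mu)} f(\gamma),\qquad \mu\in K .
\]

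\textbf{Step 1: $g$ is u.s.c., bounded above, and attained on each fibre.} The bound $g\le\sup f<\infty$ is immediate. Since $K$ is bounded in $\mathcal P_p$, its elements have uniformly bounded $p$-moments and are therefore tight. As $\nu$ is a single probability measure, every family $\Pi(\nu,\mu)$ with $\mu$ in a $\mathcal W_p$-convergent sequence is tight. Now take $\mu_n\to\mu$ in $K$ and $\gamma_n\in\Pi(\nu,\mu_n)$ with $f(\gamma_n)\ge g(\mu_n)-1/n$. Along a subsequence realizing $\limsup g(\mu_n)$, extract $\gamma_n\to\gamma$ narrowly. Then $\gamma\in\Pi(\nu,\mu)$, and the second marginals converge in $\mathcal W_p$, so $\gamma_n\to\gamma$ in our topology. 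Upper semicontinuity of $f$ gives $\limsup g(\mu_n)\le f(\gamma)\le g(\mu)$. Taking $\mu_n\equiv\mu$ in the same argument shows the supremum defining $g$ is attained.

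\textbf{Step 2: apply Theorem \ref{thm:stegallnon} to $g$.} This produces $\xi$ with $M_{p'}(\xi)\le\varepsilon$ and a strongly exposed maximum point $\mu^*$ of $g-\mathcal I_p(\xi,\cdot)$ on $K$. Since $\mathcal I_p(\xi,\cdot)$ depends only on the second marginal, we have
\[
\sup_{\gamma}\big[f(\gamma)-\mathcal I_p(\xi,(\pi_2)_\#\gamma)\big]=\sup_K\big[g-\mathcal I_p(\xi,\cdot)\big],
\]
and this value is attained at any $\gamma^*\in\Pi(\nu,\mu^*)$ with $f(\gamma^*)=g(\mu^*)$, which exists by Step 1. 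Let $(\gamma_n)$ be a maximizing sequence for the perturbed functional and set $\mu_n=(\pi_2)_\#\gamma_n$. From $g(\mu_n)\ge f(\gamma_n)$ we see that $(\mu_n)$ is maximizing for $g-\mathcal I_p(\xi,\cdot)$, hence $\mu_n\to\mu^*$ in $\mathcal W_p$. By the tightness argument of Step 1, every subsequence of $(\gamma_n)$ has a further subsequence converging to some $\gamma\in\Pi(\nu,\mu^*)$. Using also that $\mathcal I_p(\xi,\cdot)$ is continuous along $\mathcal W_p$-convergent sequences (it is continuous on $\mathcal P_{p'}\times\mathcal P_p$), upper semicontinuity of $f$ gives $f(\gamma)\ge g(\mu^*)$, so every such limit lies in the fibre argmax $A:=\{\gamma\in\Pi(\nu,\mu^*): f(\gamma)=g(\mu^*)\}$.

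\textbf{Main obstacle: uniqueness within the fibre.} Strong exposedness in the sense of the statement requires $A$ to be a single point. The perturbation $\mathcal I_p(\xi,\cdot)$ only sees second marginals, so it cannot separate distinct elements of $A$. To force uniqueness, I would go back to the lift and redo the argument of Theorem \ref{thm:stegallnon} directly rather than through $g$. Fix $Z_0$ with $\mathcal L(Z_0)=\nu$, take $E$ as in that proof, and set
\[
\mathcal G(X):=\sup\{f(\mathcal L(Z,X)) : \mathcal L(Z)=\nu\}=g(\mathcal L(X)).
\]
Stegall's lemma (Theorem \ref{thm:stegall}) gives an exposed point $X^*$. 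I would then try to show, using that $(\Omega,\mathcal A,\mathcal P)$ is atomless, that two distinct elements of $A$ can be realized as $\mathcal L(Z,X^*)$ and $\mathcal L(Z',X')$ with $X'\neq X^*$ and $\mathcal L(X')=\mu^*$. Both realizations would give $\mathcal G(X')+\mathbb E[Y\cdot X']$ maximal, contradicting exposedness of $X^*$. This last rearrangement step is the one I expect to be delicate. Failing it, the honest conclusion of the argument above is strong exposedness modulo the fibre $A$, i.e.\ every maximizing sequence of couplings has second marginals converging to $\mu^*$ and narrow cluster points in $A$. I would check whether that weaker property suffices for the later doubling-of-variables use.
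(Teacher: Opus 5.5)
Your Steps 1 and 2 are sound for your choice of topology, and they prove exactly what you say. The perturbed supremum is attained. Every maximizing sequence of couplings has second marginals converging to $\mu^*$, with cluster points in the fibre argmax $A$.

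The gap is the one you flag, and the repair you sketch cannot close it. $\mathcal G=g\circ\mathcal L$ is law-invariant, since the pairing with $Z$ has already been optimized away. So Stegall's lemma applied to $\mathcal G$ only ``sees'' $\mu^*$. Suppose you realize another element of $A$ with some $X'\neq X^*$ of law $\mu^*$. Then $\mathcal G(X')=\mathcal G(X^*)$, while $\mathbb E[Y\cdot X']<\mathbb E[Y\cdot X^*]$ because $X^*$ is the unique maximizer. So $X'$ is simply not a maximizer, and nothing is contradicted.

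In fact no argument can force uniqueness in the fibre with a perturbation that depends only on $(\pi_2)_\#\gamma$. Take $f(\gamma)=-\mathcal W_p((\pi_2)_\#\gamma,\mu_0)$ with $\nu$ and $\mu_0\in K$ both non-Dirac. Gluing an optimal coupling for $\mathcal I_p(\xi,\mu')$ with an optimal one for $\mathcal W_p(\mu',\mu)$ gives $|\mathcal I_p(\xi,\mu)-\mathcal I_p(\xi,\mu')|\le M_{p'}(\xi)^{1/p'}\mathcal W_p(\mu,\mu')$. Hence, when $M_{p'}(\xi)<1$, the set of maximizers of $\gamma\mapsto f(\gamma)-\mathcal I_p(\xi,(\pi_2)_\#\gamma)$ is all of $\Pi(\nu,\mu_0)$, which is not a singleton. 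So under your reading the statement fails for $\varepsilon<1$. ``Strong exposedness modulo the fibre'' is the most your route can give. It would plausibly suffice for the uses in Section \ref{sec:min}, which only need an optimizer and a small supergradient of the perturbation, but it is not the corollary.

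The missing idea, which is the paper's proof, is not to optimize over $Z$ at all. Fix once and for all $Z$ with $\mathcal L(Z)=\nu$. Lift $f$ to $\mathcal F(X)=f(\mathcal L(Z,X))$ on the same convex, closed, bounded set $E=\{X\in\mathbb L^p:\mathcal L(X)\in K\}$, and run the proof of Theorem \ref{thm:stegallnon}. This $\mathcal F$ is \emph{not} law-invariant, so distinct couplings in one fibre are distinct points of $E$, and strong exposedness in $\mathbb L^p$ separates them. A maximizing sequence $\gamma_n$ is realized as $\mathcal L(Z,X_n)$ with $X_n\to X^*$ in $\mathbb L^p$, hence $\gamma_n\to\mathcal L(Z,X^*)$. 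You should pick $Z$ so that $\Omega$ retains randomness independent of it; then every $\gamma\in\Pi(\nu,\mu)$ is of the form $\mathcal L(Z,X)$.

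The price, which the example above shows is unavoidable, is that the perturbation $\mathbb E[Y\cdot X]$ is attached to the fixed pair $(Z,Y)$. At the level of measures it is $\gamma\mapsto\sup\{\mathbb E[Y\cdot X]:\mathcal L(Z,X)=\gamma\}$. This is a conditional version of $-\mathcal I_p$: it depends on $\gamma$ through the joint law $\mathcal L(Z,Y)$, not only through $(\pi_2)_\#\gamma$. In the lift it is still linear with norm at most $\varepsilon$, which is all that Section \ref{sec:min} uses.
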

\begin{proof}
The proof follows exactly the one of Theorem \ref{thm:stegallnon}. In this case, we consider a random variable $Z$ of law $\nu$ and we consider the lifted function $\mathcal F(X) = f(\mathcal L(Z,X))$. The rest of the arguments then follows.
\end{proof}
The previous statement may be unusual, but in what follows, we shall consider functions on sets such as $\cup_{\mu \in K}\Pi(\nu,\mu)$, which are somehow natural when considering relations like the one appearing in the definition of super-differentials. The next extension is more usual and simply state the result for functions of several variables, which needs to be stated on its own since, for instance, $\mathcal P_p(\R^d)\times\mathcal P_p(\R^d) \ne \mathcal P_p(\R^{2d})$.
\begin{corollary}\label{cor2}
Let $\mathcal O$ be a closed convex subset of $\mathbb{R}^d$ and $K$ a closed, convex along couplings, bounded subset of $\mathcal{P}_p(\mathcal O)$. Then, for every upper bounded u.s.c. function $f: K\times K \mapsto \mathbb{R}$, $\varepsilon > 0$, there exists $\xi_1,\xi_2 \in \mathcal{P}_{p'}(\mathbb{R}^d)$ such that $(\mu,\mu') \mapsto f(\mu,\mu') - \mathcal I_p(\xi_1, \mu)- \mathcal I_p(\xi_2,\mu')$ has a strongly exposed point of maximum on $K \times K$, and $M_{p'}(\xi_1),M_{p'}(\xi_2) \leq \varepsilon$.
\end{corollary}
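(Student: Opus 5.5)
We argue exactly as in the proof of Theorem \ref{thm:stegallnon}, but lift to the product Banach space $\mathbb L^p\times\mathbb L^p$. Set $E:=\{X\in\mathbb L^p,\ \mathcal L(X)\in K\}$, which is convex, closed and bounded in $\mathbb L^p$ since $K$ is coupling convex, closed and bounded. Then $E\times E$ is convex, closed and bounded in $\mathbb L^p\times\mathbb L^p$, which we equip with the norm $\|(X,X')\|:=\max(\|X\|_p,\|X'\|_p)$. This space is reflexive, because $1<p<\infty$, and therefore has the Radon-Nikodym property. Its dual is $\mathbb L^{p'}\times\mathbb L^{p'}$ with the dual norm $\|Y\|_{p'}+\|Y'\|_{p'}$. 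We define the lift $\mathcal F(X,X'):=f(\mathcal L(X),\mathcal L(X'))$. It is upper bounded and u.s.c. on $E\times E$, since $\mathcal W_p(\mathcal L(X_n),\mathcal L(X))\le \|X_n-X\|_p$.

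We apply Theorem \ref{thm:stegall} to $-\mathcal F$. This gives $(Y,Y')\in\mathbb L^{p'}\times\mathbb L^{p'}$ with $\|Y\|_{p'}+\|Y'\|_{p'}\le\varepsilon$ such that $(X,X')\mapsto\mathcal F(X,X')+\mathbb E[Y\cdot X]+\mathbb E[Y'\cdot X']$ has a strongly exposed point of maximum $(X^*,X'^*)$ on $E\times E$. To match the statement's bound on $M_{p'}$, we first run the argument with $\varepsilon$ replaced by a suitable smaller quantity, for instance $\min(\varepsilon,\varepsilon^{1/p'})$, so that $M_{p'}(\xi_i)=\|\cdot\|_{p'}^{p'}\le\varepsilon$. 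We then set $\xi_1:=\mathcal L(-Y)$ and $\xi_2:=\mathcal L(-Y')$.

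The key observation is the separation of variables in the lifted functional. For fixed laws $(\mu,\mu')$, the supremum over $X,X'$ with $\mathcal L(X)=\mu$ and $\mathcal L(X')=\mu'$ splits into independent suprema:
\[
\sup\,\mathcal F(X,X')+\mathbb E[Y\cdot X]+\mathbb E[Y'\cdot X']=f(\mu,\mu')+\sup_{\mathcal L(X)=\mu}\mathbb E[Y\cdot X]+\sup_{\mathcal L(X')=\mu'}\mathbb E[Y'\cdot X'].
\]
Because the space is atomless and Polish, every coupling of $\mathcal L(Y)$ and $\mu$ is realized by some $X$ with $\mathcal L(X)=\mu$, at least up to an approximation argument. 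Hence $\sup_{\mathcal L(X)=\mu}\mathbb E[Y\cdot X]=-\mathcal I_p(\xi_1,\mu)$, and likewise for the second term. Consequently the supremum of the lifted functional over $E\times E$ equals the supremum of $g(\mu,\mu'):=f(\mu,\mu')-\mathcal I_p(\xi_1,\mu)-\mathcal I_p(\xi_2,\mu')$ over $K\times K$. This value is attained at $(\mu^*,\mu'^*):=(\mathcal L(X^*),\mathcal L(X'^*))$, since $X^*$ and $X'^*$ must realize the optimal couplings.

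For strong exposedness, let $(\mu_n,\mu'_n)$ be a maximizing sequence for $g$. We pick $X_n,X'_n$ with laws $\mu_n,\mu'_n$ that are $1/n$-optimal in the two inner suprema above, so that $(X_n,X'_n)$ is a maximizing sequence for the lifted functional. Exposedness then gives $\|X_n-X^*\|_p+\|X'_n-X'^*\|_p\to0$, and hence $\mathcal W_p(\mu_n,\mu^*)+\mathcal W_p(\mu'_n,\mu'^*)\to0$.

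The main obstacle is the identification of $\sup_{\mathcal L(X)=\mu}\mathbb E[Y\cdot X]$ with $-\mathcal I_p(\xi_1,\mu)$ when $Y$ is fixed in advance. The paper uses this tacitly in the proof of Theorem \ref{thm:stegallnon}. I would justify it by density of randomized transport maps on an atomless Polish space, combined with continuity of $\gamma\mapsto\int x\cdot y\,d\gamma$ under convergence in $\mathcal P_{p'}\times\mathcal P_p$. Only approximate attainment is needed, which is exactly what the maximizing-sequence argument uses.
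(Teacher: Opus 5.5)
Your route is the paper's: lift $f$ to $\mathbb L^p\times\mathbb L^p$, apply Stegall's lemma there, and push the maximizer down using that the supremum over representatives of $(\mu,\mu')$ splits into two independent inner suprema. There is, however, a sign slip in the identification step. By definition $-\mathcal I_p(\xi,\mu)=\sup_{\gamma\in\Pi(\xi,\mu)}\int x\cdot y\,\gamma(dx,dy)$, and the coupling $\mathcal L(-Y,X)$ contributes $-\mathbb E[Y\cdot X]$. So with your choice $\xi_1=\mathcal L(-Y)$ you get $-\mathcal I_p(\xi_1,\mu)=-\inf_{\mathcal L(X)=\mu}\mathbb E[Y\cdot X]$, not $\sup_{\mathcal L(X)=\mu}\mathbb E[Y\cdot X]$. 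For $Y\equiv y_0$ constant, the left side of your identity is $y_0\cdot\int x\,\mu(dx)$ while the right side is $-y_0\cdot\int x\,\mu(dx)$.

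Since your perturbation is $+\mathbb E[Y\cdot X]+\mathbb E[Y'\cdot X']$, you must take $\xi_1=\mathcal L(Y)$ and $\xi_2=\mathcal L(Y')$. This is the choice $\xi_i=\mathcal L(Y_i)$ made in the paper's proof of this corollary; the $\mathcal L(-Y)$ you carried over from the proof of Theorem~\ref{thm:stegallnon} has the same slip there. As $M_{p'}$ is invariant under $y\mapsto -y$, nothing else changes.

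With this correction the argument is complete, and in several places more careful than the paper's. The rescaling of $\varepsilon$ to $\varepsilon^{1/p'}$ is genuinely needed, since $M_{p'}(\xi_i)=\|Y_i\|_{p'}^{p'}$. Attainment at $(\mu^*,\mu'^*)$ follows at once from $g(\mu^*,\mu'^*)\geq \mathcal F(X^*,X'^*)+\mathbb E[Y\cdot X^*]+\mathbb E[Y'\cdot X'^*]$. You also correctly isolate that only the inequality $\sup_{\mathcal L(X)=\mu}\mathbb E[Y\cdot X]\geq-\mathcal I_p(\mathcal L(Y),\mu)$ uses atomlessness, the reverse one being trivial.

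To prove that inequality, apply the density of map-induced plans to the atomless measure $\mathcal P$ on $\Omega$ and the plan $\mathcal P(d\omega)\,\gamma_{Y(\omega)}(dx)$, where $\gamma_y$ disintegrates a given $\gamma\in\Pi(\mathcal L(Y),\mu)$. Do not apply it to $\mathcal L(Y)$, which may have atoms. Because the $\Omega$-marginal is fixed, the convergence survives composition with the merely measurable $Y$. Uniform integrability of $x\cdot y$ over $\Pi(\mathcal L(Y),\mu)$ then yields convergence of $\mathbb E[Y\cdot X_n]$.
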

\begin{proof}
Once again, the proof follows exactly the one of Theorem \ref{thm:stegallnon}. In this case, we consider the lifted function $\mathcal F(X,Y) = f(\mathcal L(X),\mathcal L(Y))$. Stegall's Lemma on $\mathbb L^p\times \mathbb L^p$ then yields a pair $(Y_1,Y_2)$ for the perturbation. We let $\xi_i = \mathcal L(Y_i)$ and we conclude as previously.
\end{proof}

\section{On the minimum of two sub-solutions for convex Hamilton-Jacobi equations}\label{sec:min}

In this section, we revisit a result of Alvarez \cite{ref4} who established comparison of bounded from below viscosity solutions to HJB equations in $\mathbb R^d$. Namely, we prove that, for a convex HJB equation, the minimum of two sub-solutions is also a sub-solution. In fact we directly present our new proof at the level of the Wasserstein space and leave to the interested reader to verify that a similar argument also works in finite dimensions.

The main result of this section will not directly be used in the proof of the comparison principle of Section \ref{sec:comp}, however we believe it is one of the core ideas behind it, so we state it independently to highlight it.

Given $U$ and $V$ two viscosity sub-solutions to

\begin{equation}\label{hjb}
U + \int_{\mathbb{R}^d} H(x, D_\mu U(\mu)(x), \mu) \, \mu(dx) = 0 \quad \text{on } \mathcal{P}_p(\mathbb{R}^d)
\end{equation}
for $H: \mathbb{R}^d \times \mathbb{R}^d \times \mathcal{P}_p(\mathbb{R}^d) \mapsto \mathbb{R}$, convex and differentiable in its second argument and satisfying 
\begin{equation}\label{Hdef}
\begin{aligned}
\forall R > 0&, \exists C_R > 0, \forall \mu \in \mathcal P_p(\mathbb R^d), M_p(\mu) \leq R,  \forall x,q \in \R^d,\\
& |H(x,q,\mu)| \leq C_R(1 + |q|^{p'}),
\end{aligned}
\end{equation}
\begin{equation}\label{eq:contH}
\begin{aligned}
\forall \mu&,\nu \in \mathcal P_p(\mathbb R^d), \forall x,y,q \in \R^d, \\
& |H(x,q,\mu) - H(y,q,\nu)| \leq \omega_H((1+ |q|)(|x-y| + \mathcal W_p(\mu,\nu))),
 \end{aligned}
\end{equation}
for $\omega_H$ a modulus of continuity. Note that, using \eqref{Hdef} and the convexity of $H$, we obtain as well as
$$
\begin{aligned}
\forall R > 0&, \exists C_R > 0, \forall \mu \in \mathcal P_p(\mathbb R^d), M_p(\mu) \leq R,  \forall x,q \in \R^d,\\
& |D_q H(x,q,\mu)| \leq C_R (1 + |q|^{p'-1}).
\end{aligned}
$$
We want to establish that $\min(U, V)$ is also a subsolution. Instead of working directly on the minimum, we study first a regularization of it. For $\delta \in (0, 1)$, consider $F_\delta: \mathbb{R}^2 \mapsto \mathbb{R}$ defined by

\[
F_\delta(u, v) = u - g_\delta(u - v)
\]
where $g_\delta$ is an approximation of the positive part given by

\[
g_\delta(x) = \begin{cases}
0 & \text{if } x \leq 0, \\
\frac{1-\delta}{2\delta}x^2 & \text{if } x \in [0, \delta], \\
(1-\delta)x & \text{else.}
\end{cases}
\]
We now list some properties of $F_\delta$ that we shall use below. The sequence $(F_\delta(\cdot,\cdot))_{\delta>0}$ converges locally uniformly toward the minimum as $\delta \to 0$. For all $\delta \in (0,1)$, $F_\delta$ is a $C^1$ function and, for any $u, v \in \mathbb{R}$, $\partial_u F_\delta(u,v), \partial_v F_\delta(u,v) \in (0,1)$ and $\partial_u F_\delta(u,v) + \partial_v F_\delta(u,v) = 1$. Furthermore the map $(u,v) \mapsto \partial_u F_\delta(u,v)u + \partial_v F_\delta(u,v)v$ converges locally uniformly toward the minimum as $\delta \to 0$.

The main result of this section is the following.

\begin{theorem}\label{thm:min}
Let $U$ and $V$ be two continuous viscosity sub-solutions to \eqref{hjb} which are locally bounded from above. Then $U \wedge V$ is also a viscosity sub-solution of \eqref{hjb}.
\end{theorem}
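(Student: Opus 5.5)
We show that $W_\delta := F_\delta(U,V)$ is a viscosity sub-solution of \eqref{hjb} for each $\delta\in(0,1)$. We then let $\delta\to 0$ and use the stability of sub-solutions under local uniform convergence. Since $F_\delta\to\min$ locally uniformly and $U,V$ are continuous, $W_\delta\to U\wedge V$ locally uniformly. Stability is proved in the standard way: perturb the test situation so that the maximum becomes strict, then pass to the limit. Here the perturbation comes from Theorem \ref{thm:stegallnon} on a closed ball around $\mu_0$, which is coupling convex by the remarks following that theorem. Its superdifferential has size $\le\varepsilon$, and it is absorbed using \eqref{eq:contH} and the growth bound on $D_qH$.

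\textbf{Step 1 (super-differential of $W_\delta$ splits).} Fix $\mu_0$ and $\gamma\in\partial^+W_\delta(\mu_0)$. Set $a=\partial_uF_\delta(U(\mu_0),V(\mu_0))$ and $b=1-a$, so $a,b\in(0,1)$.

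We double variables. With $(X_0,Z_0)$ of law $\gamma$, consider
\[
\Phi(\mu,\nu)=F_\delta(U(\mu),V(\nu))-\mathbb{E}[\,Z_0\cdot(\ldots)\,]-\tfrac{1}{p\varepsilon}\mathcal W_p^p(\mu,\nu)-\kappa\,\mathcal W_p^p(\mu,\mu_0)
\]
on a ball $K\times K$, and perturb it through Corollary \ref{cor2} (or Corollary \ref{cor1}, to keep track of the coupling with $\gamma$). This gives strongly exposed maxima $(\mu_\varepsilon,\nu_\varepsilon)$ converging to $(\mu_0,\mu_0)$. Because $F_\delta$ is $C^1$ and increasing in both arguments, $F_\delta(U(\mu),V(\nu))$ is controlled by its first-order Taylor expansion at $(U(\mu_\varepsilon),V(\nu_\varepsilon))$. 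This expansion is valid for nearby points, and continuity of $U,V$ makes the error $o(\cdot)$ uniformly.

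From this we extract elements $\gamma_U\in\partial^+U(\mu_\varepsilon)$ and $\gamma_V\in\partial^+V(\nu_\varepsilon)$ with the following property. Writing their second marginal variables as $P$ and $Q$, we have $a_\varepsilon P+b_\varepsilon Q\approx Z_0$ at the level of lifts on a common probability space, where $a_\varepsilon,b_\varepsilon$ are the derivatives of $F_\delta$ at $(U(\mu_\varepsilon),V(\nu_\varepsilon))$. The errors are $O(\varepsilon\text{-perturbation})$ plus the penalization gradients. This step is essentially the Crandall--Ishii / Alvarez mechanism, carried out on the lift $\mathbb L^p$ via Proposition \ref{prop1}.

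\textbf{Step 2 (convexity).} Apply the sub-solution inequalities:
\[
U(\mu_\varepsilon)+\mathbb{E}[H(X_\varepsilon,P,\mu_\varepsilon)]\le 0,\qquad V(\nu_\varepsilon)+\mathbb{E}[H(Y_\varepsilon,Q,\nu_\varepsilon)]\le 0.
\]
Multiply the first by $a_\varepsilon$, the second by $b_\varepsilon$, and add. Convexity of $H$ in $q$ (Jensen pointwise with weights $a_\varepsilon+b_\varepsilon=1$) gives
\[
a_\varepsilon U(\mu_\varepsilon)+b_\varepsilon V(\nu_\varepsilon)+\mathbb{E}[H(X_\varepsilon,a_\varepsilon P+b_\varepsilon Q,\mu_\varepsilon)]\le \text{error},
\]
where the error collects the mismatch between $X_\varepsilon$ and $Y_\varepsilon$ and between $\mu_\varepsilon$ and $\nu_\varepsilon$. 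That mismatch is controlled by \eqref{eq:contH} together with the standard estimate $\tfrac1\varepsilon\mathcal W_p^p(\mu_\varepsilon,\nu_\varepsilon)\to0$. The remaining point is to compare $a_\varepsilon U+b_\varepsilon V$ with $W_\delta=U-g_\delta(U-V)$. Convexity of $g_\delta$ with $g_\delta(0)=0$ gives $g_\delta(x)\le x g_\delta'(x)$, hence
\[
aU+bV=U-g'_\delta(U-V)(U-V)\le W_\delta .
\]
This inequality points the wrong way. The paper's remark instead uses that $aU+bV\to\min$ as $\delta\to0$. So I would prove only the approximate sub-solution inequality
\[
W_\delta(\mu_0)+\int H\,d\gamma\le \rho(\delta),
\]
with $\rho(\delta)\to 0$ locally uniformly, since $|aU+bV-W_\delta|\to0$ locally uniformly. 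Letting $\delta\to0$ then still yields that $U\wedge V$ is a sub-solution.

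\textbf{Main obstacle.} The hard part is Step 1: producing genuine super-differentials of $U$ and $V$ separately from one of $F_\delta(U,V)$, given that $U$ and $V$ are merely continuous and the space is not locally compact. Existence of maxima for the doubled functional requires the non-linear Stegall perturbation, and the couplings realising $\gamma_U$ and $\gamma_V$ must live on a common coupling with $\gamma$ so that $a_\varepsilon P+b_\varepsilon Q$ makes sense. I would handle both issues by working on $\mathbb L^p$ via Proposition \ref{prop1}, applying Theorem \ref{thm:stegall} directly there, and pulling the conclusions back down. The $p'$-growth bounds \eqref{Hdef} are then used to pass to the limit in the Hamiltonian terms as $\varepsilon\to0$.
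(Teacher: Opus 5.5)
Your two-layer architecture is a legitimate reorganisation of the argument: first show that $W_\delta=F_\delta(U,V)$ is a sub-solution up to an error, then let $\delta\to0$ by stability. You also resolved the wrong-way inequality correctly. With $x=u-v$ one has $0\le F_\delta(u,v)-(\partial_uF_\delta\,u+\partial_vF_\delta\,v)=xg_\delta'(x)-g_\delta(x)\le\delta/2$ for all $(u,v)$, so in fact $\rho(\delta)=\delta/2$ works globally.

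The gap is Step 1, which carries the whole content of the theorem: it is the paper's proof with $\partial^+W_\delta$ in place of $\partial^+(U\wedge V)$. You only assert it, and the concrete recipe you give fails at the localisation. The penalisation $\kappa\mathcal W_p^p(\mu,\mu_0)$ sees only the marginal $\mu$. The linear term $\int z\cdot(y-x)\,\Gamma(dx,dz,dy)$, however, depends on the coupling $\Gamma\in\Pi(\gamma,\mu)$, and it is this coupling that must be driven to the identity coupling $(X_0,Z_0,X_0)$. For example, take $p=2$, put $\frac12M_2$ in place of $W_\delta$, and let $\gamma=(\mathrm{Id},\mathrm{Id})_\#\mu_0\in\partial^+(\frac12M_2)(\mu_0)$. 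Then $\frac12M_2(\mu)-\int z\cdot(y-x)\,d\Gamma=\frac12M_2(\mu_0)+\frac12C_2(\tilde\gamma)^2$ with $\tilde\gamma=(\pi_1,\pi_3)_\#\Gamma$. So on $\Pi(\gamma,\mu_0)$, where your penalisation vanishes, the identity coupling is the \emph{worst} choice for the maximisation as soon as $\mu_0$ is not a Dirac mass. Hence your maximisers need not approach $(X_0,Z_0,X_0)$, and nothing yields $a_\varepsilon P+b_\varepsilon Q\approx Z_0$. Passing to the lift does not help by itself, since $\mathcal W_p(\mathcal L(X),\mu_0)$ does not control $\|X-X_0\|_p$.

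What is needed, and what the paper does, is to localise at the level of couplings:
\begin{itemize}
\item the functional is defined on $\mathcal P_\gamma=\{\Theta:(\pi_1,\pi_2)_\#\Theta=\gamma\}$;
\item it contains the modulus term $C_2(\tilde\gamma)\omega(C_2(\tilde\gamma))$ from the definition of $\partial^+$, and the super-differential inequality is taken strict off the identity coupling;
\item Corollary \ref{cor1} supplies the perturbation $\mathcal I_2(\eta,(\pi_3,\pi_4)_\#\Theta)$.
\end{itemize}
At the optimal $\Theta^*$ one reads off \eqref{couple1} and \eqref{couple2}. In the weighted sum the doubling gradients $\pm(X'-Y)/\varepsilon$ cancel exactly; they are not small, only $\|X'-Y\|_2^2/\varepsilon$ is. What remains is $Z+\xi_1+\xi_2+\varphi(X'-X)$, and strictness forces $X'\to X$.

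Once Step 1 is done this way, there is no reason to test $W_\delta$ rather than $U\wedge V$. The paper takes $\gamma\in\partial^+(U\wedge V)(\mu)$ directly and lets $\kappa,\varepsilon,\delta\to0$ together, using only that $F_\delta$ and $\partial_uF_\delta\,u+\partial_vF_\delta\,v$ both converge to $\min$ locally uniformly. Your stability step, by contrast, would need in this non-locally-compact setting a second coupling-level Stegall localisation of essentially the same kind.

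Finally, you (like the paper) take $a,b\in(0,1)$. With the stated $g_\delta$, however, $b=g_\delta'(U-V)=0$ on $\{U\le V\}$ and is arbitrarily small just above it. So dividing by $b_\varepsilon$ to produce an element of $\partial^+V$ needs a fix, for example a convex $g_\delta$ with $g_\delta'\in[\delta,1-\delta]$.
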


\begin{proof}
We present the proof in the case $p=2$ to simplify notation (namely when considering the super-differential of the Wasserstein distances), but this extends immediately to the general case $p \in (1,\infty)$. Let $\gamma \in \partial^+ (U \wedge V)(\mu)$. By definition, there exists a modulus of continuity $\omega$ such that $\forall \mu' \in \mathcal{P}_2(\R^d), \forall \Gamma \in \Pi(\gamma, \mu')$

\begin{equation}\label{eq:omega}
U \wedge V(\mu') \leq U \wedge V(\mu) + \int_{\R^{3d}} z \cdot (y - x) \Gamma(dx,dz, dy) +  C_2(\tilde\gamma)\omega(C_2(\tilde\gamma))
\end{equation}
where we used the notation $\tilde{\gamma} = (\pi_1, \pi_3)_\# \Gamma$. Note that, without loss of generality we can always assume that the previous inequality is strict as soon as $\mu' \neq \mu$. Hence, taking $U\wedge V(\mu')$ on the right-hand side of the inequality, the function on the right-hand side is bounded from below by $0$ (as a function of $\Gamma$ and reaches its minimum for the unique $\Gamma$ such that $C_2(\tilde \gamma) = 0$.

Let us now consider $\varepsilon > 0$, $\delta \in (0,1)$ and consider the function $\mathcal{Z}$ defined on $\mathcal{P}_\gamma := \{ \Theta\in \mathcal{P}_{2}(\mathbb{R}^{4d}), (\pi_1, \pi_2)_\# \Theta = \gamma \}$ by

\[
\begin{aligned}
\mathcal{Z}(\Theta) = &U \wedge V(\mu) - F_\delta(U(\mu'), V(\nu)) + \frac{1}{2\varepsilon} \mathcal{W}_2^2(\mu', \nu) + C_2(\tilde\gamma)\omega(C_2(\tilde\gamma))\\
 &+ \int_{\R^{3d}} z \cdot (y - x) \Gamma(dx,dz, dy)
\end{aligned}
\]
where we used the notation $(\pi_1, \pi_2, \pi_3)_\# \Theta = \Gamma$, $(\pi_1, \pi_3)_\# \Theta= \tilde{\gamma}$, $(\pi_3)_\# \Theta = \mu'$ and $(\pi_4)_\# \Theta= \nu$, which would be in force for the rest of the proof. In other words, we doubled the variable in the minimum, penalized this doubling of variable with the $2$-Wasserstein distance, and regularized the minimum with $F_\delta$.

We would like to argue as if $\mathcal{Z}$ is coercive, and then use the variants of Stegall's Lemma introduced in Section \ref{sec:stegall} to conclude the existence of minimizers. Remark that $\mathcal Z$ is coercive if it is coercive in $\mu'$ and in $\nu$. Up to taking $\omega$ with a bigger growth, we can always assume that $\mathcal{Z}$ is coercive in $\mu'$.

Assume for the moment that $V$ growth in $\nu$ is bounded by $C(1 + M_2(\nu))$ for some constant $C$, we shall come back on the general case at the end of the proof. The coercivity in $\nu$ then follows provided that $\epsilon$ is small enough. Hence, from now on, we can argue as if we are restricted to a bounded set of $\mathcal P_\gamma$. Therefore, from our assumption, $U$ and $V$ are bounded on the set of interest.
Thanks to Corollary \ref{cor1}, we know that for every $\kappa > 0$, $\exists \eta \in \mathcal P_{2}(\R^{2d})$ such that $M_{2}(\eta) \leq \kappa$ and

\[
\Theta \mapsto \mathcal{Z}(\theta) + \mathcal I_2(\eta, (\pi_3, \pi_4)_\# \Theta)
\]
has a strongly exposed point of minimum $\Theta^*$ on $\mathcal{P}_\gamma$. Consider now a sextuple of random variables $(X, X', Y, Z, \xi_1, \xi_2)$ such that

\[
\mathcal{L}(X, Z, X', Y) = \Theta^* \quad \text{and} \quad (\mathcal L((\xi_1, \xi_2), (X', Y))) \quad \text{is optimal for } \mathcal I_p(\eta, (\pi_3, \pi_4)_\# \Theta).
\]
 We denote $\mu'_* = (\pi_3)_\#\Theta^*$ and $\nu_* = (\pi_4)_\#\Theta^*$. From the minimality of $\Theta^*$, we necessary have that $\mathcal L(X',Y)$ is optimal for $\mathcal W_2(\mu'_*,\nu_*)$, and furthermore we deduce that

\begin{equation}\label{couple1}
\mathcal{L}\left(X', \frac{1}{\partial_u F_\delta(U(\mu'_*), V(\nu_*))}\left(\frac{X' - Y}{\varepsilon} + Z + \xi_1 + \varphi(X' - X)\right) \right)\in \partial^+ U(\mu'_*),
\end{equation}

\begin{equation}\label{couple2}
\mathcal{L}\left(Y, \frac{1}{\partial_v F_\delta(U(\mu'_*), V(\nu_*))}\left(\frac{Y - X'}{\varepsilon} +\xi_2 \right) \right)\in \partial^+ V(\nu_*),
\end{equation}
where $\varphi(x) = \nabla_x(|x|\omega(|x|))$, which is continuous and such that $\phi(0) = 0$. To lighten notation, we note $(X', A)$ and $(Y, B)$ the two couples of random variables whose law is taken in respectively \eqref{couple1} and \eqref{couple2}. Using the fact that $U$ and $V$ are viscosity sub-solutions, we deduce that
\[
U(\mathcal{L}(X')) + \mathbb{E}[H(X', A, \mathcal{L}(X'))] \leq 0, \quad (i)
\]
\[
V(\mathcal{L}(Y)) + \mathbb{E}[H(Y, B, \mathcal{L}(Y))] \leq 0. \quad (ii)
\]
Using the regularity assumption on $H$, we know that
\[
|H(X', B, \mathcal{L}(X')) - H(Y, B, \mathcal{L}(Y))| \leq \omega_H((1 + |B|)(|X' - Y| + \mathcal{W}_2(\mathcal{L}(X'), \mathcal{L}(Y))))).
\]
We now write the linear combination $\partial_u F_\delta(U(\mathcal{L}(X')), V(\mathcal{L}(Y))) (i) + \partial_v F_\delta(U(\mathcal{L}(X')), V(\mathcal{L}(Y)))(ii)$, use the previous estimate on $H$, Jensen's inequality and the convexity of $H$ in its second argument to obtain

\begin{equation}\label{eq2}
\begin{aligned}
 & \mathbb{E}[H(X', Z + \xi_1 + \xi_2 + \varphi(X'-X) , \mathcal{L}(X'))]+\partial_u F_\delta \cdot U(\mathcal{L}(X')) \\
 &+ \partial_v F_\delta \cdot V(\mathcal{L}(Y)) \leq \omega_H\left((1+\kappa)\delta^{-1}\left(\|X'-Y\|_2 + \frac{\|X'-Y\|_2^2}{\epsilon}\right)\right).
 \end{aligned}
\end{equation}
It now remains to pass to the limit $\kappa, \varepsilon, \delta \to 0$ to conclude. First observe that from standard arguments (e.g. Lemma 3.1 in \cite{users}), we know that $\frac{\|X'-Y\|_2^2}{\varepsilon} \to 0$.

On the other hand, because of the way we choose $\omega$, we know that as $\kappa, \varepsilon, \delta \to 0$, $X' \to X$ in $\mathbb L^2$. Indeed, this comes from: i) the previous estimate on the distance between $X'$ and $Y$, ii) the bound on $U$ and $V$ thanks to which $|F_\delta(U(\mu'),V(\nu))- U(\mu')\wedge V(\nu)|$ goes to $0$ as $\delta \to 0$, uniformly in $\epsilon$ and $\kappa$, and iii) the fact that \eqref{eq:omega} is strict if $\mu' \ne \mu$. Hence, $X', Y \to X$, once again in $\mathbb L^2$, so that $\varphi(X'- Y) \to 0$. Using the continuity of $U$ and $V$ at $\mathcal{L}(X)$, we know that $\partial_u F_\delta \cdot U(\mathcal{L}(X')) + \partial_v F_\delta \cdot V(\mathcal{L}(Y)) \to U \wedge V(\mathcal{L}(X))$. Finally, using the regularity of $H$, we obtain

\[
\begin{aligned}
& \left| H (X, Z, \mathcal{L}(X)) - H (X', Z + \xi_1 + \xi_2 + \varphi(X'- Y), \mathcal{L}(X')) \right| \\
& \leq \left| H (X, Z, \mathcal{L}(X)) - H (X', Z, \mathcal{L}(X')) \right| \\
& + \left| H (X', Z, \mathcal{L}(X')) - H (X', Z + \xi_1 + \xi_2 + \varphi(X'-Y), \mathcal{L}(X')) \right| \\
& \leq \omega_H((1 + |Z|)(|X - X'| + \mathcal{W}_2(\mathcal{L}(X), \mathcal{L}(X'))|)) \\
& + C_{\|X'\|_2}(1 + |Z| + |\xi_1 + \xi_2 + \varphi(X'-Y)| )|\xi_1 + \xi_2 + \varphi(X'- Y)|.
\end{aligned}
\]

Hence, since the expectation of the right-hand side of the previous inequality vanishes in the limit, namely because $\|X'\|_2$ stays bounded, we deduce that we can indeed pass to the limit in \eqref{eq2} to obtain the result.\\

It now remains to treat the case in which $V$ grows faster than $\mathcal W_2^2$. In order to do so, we just need to perturb the previous argument by changing slightly the doubling of variable argument. We then use $\psi(\mathcal W_2^2(\mu',\nu))$, for some function $\psi$ which grows sufficiently fast at infinity. Then, Lemma 3.1 in \cite{users} yields that $\psi(\frac{\|X'-Y\|_2^2}{\varepsilon}) \to 0$, and it suffices to remark that this also implies the convergence toward $0$ of $\frac{\|X'-Y\|^2_2}{\varepsilon}\psi(\frac{\|X'-Y\|_2^2}{\varepsilon})$, which is the term we need to control in the estimate. Hence, the result follows in the general case as well.
\end{proof}

\section{Comparison principles for unbounded solutions to Hamilton-Jacobi-Bellman equations}\label{sec:comp}

In this section, we prove the main result of the paper: a comparison principle for unbounded viscosity sub/super-solutions to 
\begin{equation}\label{hjb2}
U(\mu) + \int_{\mathbb{R}^d} H(x, D_\mu U(\mu)(x)) \, \mu(dx) = F(\mu) \quad \text{in } \mathcal{P}_p(\mathbb{R}^d).
\end{equation}
The previous equation is slightly less general than \eqref{hjb1} which was considered in the previous section and we come back on this in the second part of this section. We assume here that the assumptions of the previous section still hold, which in this separable setting takes the following form. The Hamiltonian $H$ is convex in its second argument. The source term $F$ is continuous on $\mathcal P_p(\mathbb R^d)$
\begin{equation}\label{Hdef2}
\begin{aligned}
\exists C > 0,  \forall x,q \in \R^d, \quad|H(x,q)| \leq C(1 + |q|^{p'}),
\end{aligned}
\end{equation}
\begin{equation}\label{eq:contH2}
\begin{aligned}
\forall x,y,q \in \R^d,\quad |H(x,q) - H(y,q)| \leq \omega_H((1+ |q|)(|x-y|)),
 \end{aligned}
\end{equation}
for $\omega_H$ a modulus of continuity. Once again, using \eqref{Hdef2} and the convexity of $H$, we obtain as well that
$$
\begin{aligned}
\exists C > 0,  \forall x,q \in \R^d, \quad|D_q H(x,q,\mu)| \leq C (1 + |q|^{p'-1}).
\end{aligned}
$$

\subsection{Main result}
 We shall argue by using the variant of Stegall's Lemma introduced in Section 3 and the core idea of Section 4, which is that the truncation of a sub-solution by another sub-solution is still a sub-solution. To explain how to use those two key ideas, let us anticipate slightly on the following and start to give elements of proof. Using \cite{ref3,bertucci2023stochastic}, to prove that a subsolution $U$ to \eqref{hjb1} is below a supersolution $V$ to the same \eqref{hjb1}, we would like to use the standard argument which consists in looking at points of maxima of

\[
(\mu, \nu) \mapsto U(\mu) - V(\nu) - \frac{1}{p \varepsilon} \mathcal{W}_p^p(\mu, \nu).
\]
Because neither $U$ is bounded from above or $V$ bounded from below, there is no chance a priori that such points exist. The first step consists in truncating $U$ by replacing it with $U \wedge A$ for $A \in \mathbb{R}$. Even if constants are not exactly sub-solution to \eqref{hjb1}, the previous idea still applies under some assumption on $H$ and we will show that we can argue as if $U \wedge A$ is a sub-solution. We shall also restrict ourselves to functions $V$ which satisfies some bound from below so that, for any $\mu \in \mathcal{P}_p(\R^d)$, $\nu \mapsto -V(\nu) - \frac{1}{p \varepsilon} \mathcal{W}_p^p(\mu, \nu)$ is indeed coercive. So if we are able to argue that $U \wedge A$ were coercive as well, we could indeed localize points of maximum of interest. Arguing as in Alvarez \cite{ref4}, we shall construct a sub-solution $\tilde{U}$ to \eqref{hjb1} such that

\begin{equation}\label{coerc}
\lim_{M_p(\mu) \to +\infty} \quad \frac{\tilde U(\mu)}{1+ (M_p(\mu))^\frac1p} = -\infty.
\end{equation}
Then replacing $U \wedge A$ by $(1-\theta)(U \wedge A) + \theta \tilde U$ for $\theta \in [0,1]$, we will see that we can indeed compare $(1-\theta)(U \wedge A) + \theta \tilde U$ and $V$, and thus obtain the result by letting $A \to +\infty$ and $\theta \to 0^+$.\\

To make the previous heuristic precise, we need some bound from below on the source term $F$ as follows,
$$
\forall \mu \in \mathcal P_p(\R^d), \quad F(\mu) \geq -C_H(1 +(M_p(\mu))^\frac1p).
$$
We shall also require in a first time the following structural assumption on the Hamiltonian
\begin{equation}\label{eq:H3}
\forall x,q \in \R^d, \theta \in [0,1], \quad H(x,\theta q) \leq H(x,q).
\end{equation}
Note that \eqref{eq:H3} holds if for all $x$, $H(x,\cdot)$ is minimum at $0$ since $H(x,p)$ is convex in $p$. We come back on such an assumption in Section \ref{sec:structH}.

We start with a lemma.
\begin{lemma}\label{lemma:sub}
Provided that $\lambda > 0$ is small enough $C^*$ is large enough,
\[
\tilde U(\mu) := -C^* - \lambda M_p(\mu)
\]
is a sub-solution to \eqref{hjb2} such that \eqref{coerc} holds.
\end{lemma}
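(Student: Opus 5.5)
The idea is to use that $\tilde U$ is smooth, so its super-differential is reduced to the Wasserstein gradient and the sub-solution inequality becomes an explicit inequality between moments. We then pick $\lambda$ small to absorb the Hamiltonian and $C^*$ large to absorb the lower bound on $F$. The coercivity \eqref{coerc} is immediate: since $p>1$,
$$\frac{-C^*-\lambda M_p(\mu)}{1+M_p(\mu)^{1/p}}\to-\infty \quad \text{as } M_p(\mu)\to+\infty,$$
for any $\lambda>0$.

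\textbf{Identifying the super-differential.} I would pass to the lift through Proposition \ref{prop1}. The lift is $\tilde{\mathcal U}(X)=-C^*-\lambda\mathbb E[|X|^p]$. It is Fréchet differentiable on $\mathbb L^p$, with gradient $-\lambda p|X|^{p-2}X\in\mathbb L^{p'}$. At a point of Fréchet differentiability, any $Z\in\partial^+\tilde{\mathcal U}(X)$ equals this gradient. Indeed, the difference $Z-\nabla\tilde{\mathcal U}(X)$ would satisfy $\mathbb E[(Z-\nabla\tilde{\mathcal U}(X))\cdot h]\ge -o(\|h\|_p)$ for all $h$; replacing $h$ by $-h$ forces it to vanish. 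By Proposition \ref{prop1}, every $\gamma\in\partial^+\tilde U(\mu)$ is therefore of the form $\gamma=(Id,\,-\lambda p|x|^{p-2}x)_\#\mu$. This step is the only slightly delicate point, and it is routine.

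\textbf{The estimate.} It remains to check, for every $\mu$,
$$-C^*-\lambda M_p(\mu)+\int H\big(x,-\lambda p|x|^{p-2}x\big)\mu(dx)\le F(\mu).$$
By \eqref{Hdef2}, and since $(p-1)p'=p$, the integrand is at most $C\big(1+(\lambda p)^{p'}|x|^p\big)$. The left-hand side is thus bounded by
$$-C^*+C-\big(\lambda-C(\lambda p)^{p'}\big)M_p(\mu).$$
Because $p'>1$, we can choose $\lambda>0$ small enough that $C(\lambda p)^{p'}\le\lambda/2$. The left-hand side is then at most $-C^*+C-\tfrac{\lambda}{2}M_p(\mu)$.

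On the other side, $F(\mu)\ge -C_H(1+M_p(\mu)^{1/p})$. It therefore suffices that
$$-C^*+C+C_H+\sup_{m\ge0}\Big(C_H m^{1/p}-\tfrac{\lambda}{2}m\Big)\le 0.$$
The supremum is a finite constant $K_\lambda$, by Young's inequality or because $p>1$. Taking $C^*\ge C+C_H+K_\lambda$ concludes the proof. Assumption \eqref{eq:H3} is not needed here; only the growth bound \eqref{Hdef2} and the lower bound on $F$ are used.
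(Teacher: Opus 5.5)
Your proof is correct and follows the same route as the paper. Since $\tilde U$ is smooth, the growth bound \eqref{Hdef2} together with $(p-1)p'=p$ gives $\int H(x,D_\mu\tilde U(\mu)(x))\,\mu(dx)\le C+C(\lambda p)^{p'}M_p(\mu)$, and one then takes $\lambda$ small and $C^*$ large.

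You also make explicit two points the paper leaves implicit. First, $\partial^+\tilde U(\mu)$ reduces to $(Id,-\lambda p|x|^{p-2}x)_\#\mu$, via the Fr\'echet differentiability of the lift. Second, the term $C_H M_p(\mu)^{1/p}$ in the lower bound on $F$ is absorbed by $\frac{\lambda}{2}M_p(\mu)$ through Young's inequality, which is what justifies the paper's last inequality.
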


\begin{proof}
Since $\tilde U$ is smooth, it suffices to compute $\forall \mu \in \mathcal{P}_p(\R^d)$,
\[
\begin{aligned}
& U(\mu) + \int_{\mathbb{R}^d} H(x, D_\mu U(\mu)(x)) \, \mu(dx) \\
& \leq -C^* -\lambda  M_p(\mu) + \int_{\mathbb{R}^d} C_H (p\lambda)^{p'}  \ |x|^{(p-1)p'} \mu(dx) + C_H \\
& \leq C_H - C^* - \left( \lambda - C_H (p\lambda)^{p'} \right) M_p(\mu) \\
& \leq F(\mu)
\end{aligned}
\]
where the last inequality holds provided that $\lambda$ is small enough and $C^*$ is large enough.
\end{proof}

We continue with a second lemma which is the analogous of Theorem \ref{thm:min} above that we need in our comparison principle. It is in this result that we use the assumption \eqref{eq:H3}.

\begin{lemma}\label{lemma:infA}
Let $U$ be a continuous locally bounded from above viscosity sub-solution to \eqref{hjb2}. Then, for any $A \in \mathbb{R}$, $U \wedge A$ is a viscosity sub-solution to \eqref{hjb2} as well.
\end{lemma}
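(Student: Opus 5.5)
The plan is to run the proof of Theorem \ref{thm:min} again with $V \equiv A$. Here the role of the second sub-solution is played by the constant, and the gap comes from the fact that the constant is not a genuine sub-solution of \eqref{hjb2}. That gap is absorbed by \eqref{eq:H3}.

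\textbf{Setup and perturbed minimisation.} Let $\gamma \in \partial^+(U\wedge A)(\mu)$ with modulus $\omega$, chosen as in \eqref{eq:omega} so that the inequality is strict for $\mu'\neq\mu$ and the associated functional is coercive in $\mu'$. If $U(\mu)<A$, then by continuity $U\wedge A = U$ near $\mu$. Hence $\gamma\in\partial^+U(\mu)$ and the sub-solution inequality for $U$ gives the claim directly. So the interesting case is $U(\mu)\ge A$. There is no doubling of variables now, since $V$ is constant. For $\delta\in(0,1)$ consider on $\mathcal P_\gamma=\{\Gamma: (\pi_1,\pi_2)_\#\Gamma=\gamma\}$ the functional
\[
\mathcal Z(\Gamma)= U\wedge A(\mu) - F_\delta(U(\mu'),A) + \int z\cdot(y-x)\,\Gamma(dx,dz,dy) + C_p(\tilde\gamma)\omega(C_p(\tilde\gamma)).
\]
It is coercive, because $F_\delta(u,A)\le A$ is bounded above, and it is l.s.c. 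We restrict to a bounded coupling-convex set and apply Corollary \ref{cor1} with $\nu=\gamma$. This gives $\eta$ with $M_{p'}(\eta)\le\kappa$ such that $\mathcal Z+\mathcal I_p(\eta,\mu')$ has a strongly exposed minimum $\Gamma^*=\mathcal L(X,Z,X')$. As in \eqref{couple1}, we get
\[
\mathcal L\Big(X', \tfrac{1}{\partial_uF_\delta}\big(Z+\xi+\varphi(X'-X)\big)\Big)\in\partial^+U(\mu'_*),
\]
where $\partial_uF_\delta$ is evaluated at $(U(\mu'_*),A)$ and $\xi$ is the random variable coupled optimally with $X'$. Write $\theta:=\partial_uF_\delta(U(\mu'_*),A)\in(0,1]$ and $P:=Z+\xi+\varphi(X'-X)$.

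\textbf{Convex combination with the constant.} The sub-solution property of $U$ gives
\[
U(\mu'_*)+\mathbb E[H(X',P/\theta)]\le F(\mu'_*).
\]
Multiply by $\theta$ and add $(1-\theta)$ times the trivial identity $A = A$. Convexity of $H$ in $q$ gives
\[
H(X',P)\le \theta H(X',P/\theta)+(1-\theta)H(X',0).
\]
So we obtain
\[
\theta U(\mu'_*)+(1-\theta)A+\mathbb E[H(X',P)]\le \theta F(\mu'_*)+(1-\theta)\mathbb E[H(X',0)].
\]
This is the only place where the structure matters. The missing piece, that $A$ be a sub-solution, amounts to $A+\mathbb E[H(X',0)]\le F$, and that is false in general. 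We would instead use \eqref{eq:H3}, $H(x,\theta q)\le H(x,q)$, on the original inequality: $\mathbb E[H(X',P)]\le \mathbb E[H(X',P/\theta)]\le F(\mu'_*)-U(\mu'_*)$. Since $U(\mu'_*)\ge \theta U(\mu'_*)+(1-\theta)A$ whenever $U(\mu'_*)\ge A$, this yields
\[
\theta U(\mu'_*)+(1-\theta)A+\mathbb E[H(X',P)]\le F(\mu'_*).
\]
If instead $U(\mu'_*)<A$, then $g_\delta'=0$, so $\theta=1$ and the inequality is immediate. This step is the one I expect to need the most care. The point is that, thanks to \eqref{eq:H3}, we use the combination $\theta U+(1-\theta)A\le U$ in place of convexity.

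\textbf{Passing to the limit.} Let $\kappa,\delta\to0$ and argue exactly as in the end of the proof of Theorem \ref{thm:min}. By strong exposedness, the strictness of \eqref{eq:omega}, and the uniform convergence of $F_\delta(\cdot,A)$ to $\cdot\wedge A$, we get $X'\to X$ in $\mathbb L^p$, together with $\xi\to0$ and $\varphi(X'-X)\to0$. Continuity of $U$ and $F$ then gives $\theta U(\mu'_*)+(1-\theta)A\to U\wedge A(\mu)$. The bounds \eqref{eq:contH2} and $|D_qH|\le C(1+|q|^{p'-1})$ let us pass to the limit in $\mathbb E[H(X',P)]$ towards $\mathbb E[H(X,Z)]$. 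The conclusion is
\[
U\wedge A(\mu)+\int H(x,z)\,\gamma(dx,dz)\le F(\mu).
\]
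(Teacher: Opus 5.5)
Your argument follows the paper's proof. It uses the same functional $\mathcal Z$ on $\mathcal P_\gamma$, Corollary \ref{cor1} to get a strongly exposed minimiser, and the resulting element of $\partial^+U(\mu'_*)$ carrying the factor $1/\partial_uF_\delta$. It then uses \eqref{eq:H3} to remove that factor, and passes to the limit in the same way.

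The differences are cosmetic:
\begin{itemize}
\item You bound the zero-order term by $\theta U(\mu'_*)+(1-\theta)A\le U(\mu'_*)$. That then requires the convergence of $\partial_uF_\delta\,u+\partial_vF_\delta\,v$ to the minimum. The paper simply uses $U\wedge A(\mu'_*)\le U(\mu'_*)$ and the continuity of $U\wedge A$. Both work.
\item Your preliminary case split on $U(\mu)<A$ is unnecessary. Your main argument never uses $U(\mu)\ge A$: the case $U(\mu'_*)<A$ is already covered by $\theta=1$.
\end{itemize}

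One justification is false: $F_\delta(u,A)$ is not bounded above by $A$. Since $\partial_uF_\delta\ge\delta>0$, $F_\delta(\cdot,A)$ is strictly increasing. In fact, for $u\ge A+\delta$ the paper's formula gives $F_\delta(u,A)=\delta u+(1-\delta)A$. So this bound cannot supply any of the following:
\begin{itemize}
\item the lower bound on $\mathcal Z$ over bounded sets that Stegall's lemma needs;
\item the coercivity;
\item the uniform convergence of $F_\delta(U(\mu'),A)$ to $U(\mu')\wedge A$ that you use to get $X'\to X$.
\end{itemize}
These must come from the hypothesis that $U$ is locally bounded from above, which you never invoke. Combine it with $F_\delta(u,A)\le\max(u,A)$, with $F_\delta(u,A)\to u\wedge A$ uniformly on $\{u\le R\}$, and with the enlargement of $\omega$ at infinity that you mention in your setup. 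This is exactly where the paper uses that hypothesis. With this repair your proof goes through.
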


\begin{proof}
The argument follows the line of the one of Theorem \ref{thm:min}, and the fact that the constant $A$ is not necessary a viscosity sub-solution to \eqref{hjb2} is compensated by \eqref{eq:H3}. Indeed, consider $ \gamma \in \partial^+ (U \wedge A)(\mu)$ and, as in the proof of Theorem \ref{thm:min}, $\Gamma$ a point of minimum on $\mathcal P_\gamma:=\cup_{\mu' \in \mathcal P_p(\R^d)}\Pi(\gamma,\mu')$ of
\[
\mathcal Z(\Gamma):= U(\mu)\wedge A -F_{\delta}(U(\mu'),A) + C_2(\tilde\gamma) \omega(C_2(\tilde\gamma)) + \int_{\R^{3d}}z\cdot(y-x)\Gamma(dx,dz,dy),
\]
where we used the notation $(\pi_3)_\#\Gamma = \mu'$ and $(\pi_1,\pi_3)_\#\Gamma = \tilde \gamma$. Since $U$ is locally bounded from above, we can conclude arguing that, up to the use of Stegall's Lemma that we do not detail once again here, there exist $(X_n, Z_n)$ such that $\mathcal{L}(X_n, Z_n) \to \gamma$ and $\mathcal{L}(X_n, Z_n) \in \partial^+ F_{\delta}(U(\cdot), A)(\mathcal{L}(X_n))$. Thus

\[
U(\mathcal{L}(X_n)) + \mathbb{E}\left[H\left(X_n, \frac{Z_n}{\partial_u F_{\delta}(U(\mathcal{L}(X_n)),  A)}\right)\right] \leq F(\mathcal{L}(X_n)).
\]

Hence, using \eqref{eq:H3} and the fact that $U \wedge A \leq U$, we find 
$$U \wedge A(\mathcal{L}(X_n)) + \mathbb{E}[H(X_n, Z_n)] \leq F(\mathcal{L}(X_n)).
$$
 The result follows by passing to the limit thanks to the continuity of $U$.
\end{proof}

We now prove the main result of this section.

\begin{theorem}\label{thm:last}

Let $U$ and $V$ be respectively sub and super-solutions to \eqref{hjb2}, such that both satisfy the growth condition
\[
\exists C > 0, \quad U(\mu) \geq -C(1 + (M_p(\mu)^\frac1p).
\]
Assume in addition that $U$ is continuous and locally bounded from above. Then $U \leq V$.
\end{theorem}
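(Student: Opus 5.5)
We follow the heuristic given at the beginning of this section. Fix $A\in\mathbb R$ and $\theta\in(0,1)$, and set
\[
U_{A,\theta}:=(1-\theta)(U\wedge A)+\theta\tilde U ,
\]
where $\tilde U=-C^*-\lambda M_p$ is given by Lemma \ref{lemma:sub}.

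\textbf{Step 1: $U_{A,\theta}$ is a sub-solution.} By Lemma \ref{lemma:infA}, $U\wedge A$ is a sub-solution of \eqref{hjb2}, and $\tilde U$ is a smooth sub-solution. Since $H$ is convex in $q$ and the equation is linear in $U$ and $F$, the convex combination $U_{A,\theta}$ is again a sub-solution. To check this, take $\gamma\in\partial^+U_{A,\theta}(\mu)$. Because $\tilde U$ is differentiable, $\gamma$ splits as $\gamma=((1-\theta)\gamma_1+\theta D_\mu\tilde U)$ with $\gamma_1\in\partial^+(U\wedge A)(\mu)$; precisely, $(\pi_1,(\pi_2-\theta D_\mu\tilde U(\pi_1))/(1-\theta))_\#\gamma\in\partial^+(U\wedge A)(\mu)$. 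Jensen's inequality in $q$ then combines the two sub-solution inequalities.

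\textbf{Step 2: localization.} The function $U_{A,\theta}$ is bounded above by $(1-\theta)A$. It also satisfies $U_{A,\theta}\le (1-\theta)A-\theta C^*-\theta\lambda M_p(\mu)$, so it decays like $-M_p$, which is faster than the linear-in-$M_p^{1/p}$ lower bound assumed on $V$. We study
\[
\Phi(\mu,\nu)=U_{A,\theta}(\mu)-V(\nu)-\frac1{p\varepsilon}\mathcal W_p^p(\mu,\nu).
\]
Using $-V(\nu)\le C(1+M_p(\nu)^{1/p})$, the triangle inequality $M_p(\nu)^{1/p}\le M_p(\mu)^{1/p}+\mathcal W_p(\mu,\nu)$, and the fact that $-\theta\lambda M_p(\mu)$ dominates $CM_p(\mu)^{1/p}$, we obtain that $\Phi$ is bounded above and that every near-maximizing pair lies in a bounded set $B_R\times B_R$ of $\mathcal P_p$, with $R$ independent of $\varepsilon$. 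On this closed, coupling convex, bounded set we apply Corollary \ref{cor2}. The function $\Phi$ is u.s.c. provided $V$ is l.s.c.; otherwise we work with $V_*$. The corollary yields $\xi_1,\xi_2$ with $M_{p'}(\xi_i)\le\kappa$ such that $\Phi-\mathcal I_p(\xi_1,\cdot)-\mathcal I_p(\xi_2,\cdot)$ attains a strict maximum at some $(\mu_*,\nu_*)$. For $\kappa$ small compared to the localization margin, this maximum is interior to the ball, so the super- and sub-differential inequalities are available in every direction.

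\textbf{Step 3: doubling of variables.} Take an optimal coupling $(X,Y)$ for $\mathcal W_p(\mu_*,\nu_*)$ together with optimal couplings with $\xi_i$ realized as random variables $-\Xi_1,-\Xi_2$, as in the proof of Theorem \ref{thm:stegallnon}. Since $\mathcal I_p$ is superdifferentiable, we obtain
\[
\mathcal L\big(X,\,\varepsilon^{-1}|X-Y|^{p-2}(X-Y)+\Xi_1\big)\in\partial^+U_{A,\theta}(\mu_*),
\qquad
\mathcal L\big(Y,\,\varepsilon^{-1}|X-Y|^{p-2}(X-Y)-\Xi_2\big)\in\partial^-V(\nu_*).
\]
Subtracting the two viscosity inequalities gives
\[
U_{A,\theta}(\mu_*)-V(\nu_*)\le F(\mu_*)-F(\nu_*)+\mathbb E\big[H(Y,Q-\Xi_2)-H(X,Q+\Xi_1)\big],
\qquad Q:=\varepsilon^{-1}|X-Y|^{p-2}(X-Y).
\]
We control the right-hand side in three parts.
\begin{enumerate}
\item The $\Xi_i$ terms, via the bound $|D_qH|\le C(1+|q|^{p'-1})$ and Hölder's inequality, which give a bound of order $\kappa$ times a power of $\|Q\|_{p'}$.
\item The space increment, via \eqref{eq:contH2}, which gives $\mathbb E\,\omega_H((1+|Q|)|X-Y|)$. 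Here $|Q||X-Y|=|X-Y|^p/\varepsilon$, and $\mathbb E|X-Y|^p/\varepsilon\to0$ by the standard argument (Lemma 3.1 of \cite{users}), using the compactness-free penalization argument together with the uniform bound $R$.
\item The term $F(\mu_*)-F(\nu_*)$, which we bound by uniform continuity of $F$ on the relevant set, since $\mathcal W_p(\mu_*,\nu_*)\to0$.
\end{enumerate}

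\textbf{Main obstacle.} This last point is the main difficulty: $F$ is only continuous and $B_R$ is not compact. We would try to avoid the issue by noting that $\mathcal W_p(\mu_*,\nu_*)\to0$ along the specific sequence, and then either assume or derive uniform continuity of $F$ on bounded sets. Alternatively, we can argue by contradiction at a fixed point $\bar\mu$ where $U_{A,\theta}(\bar\mu)-V(\bar\mu)>0$, exploiting the strong exposedness from Corollary \ref{cor2} to obtain convergence of maximizers as $\varepsilon,\kappa\to0$.

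\textbf{Conclusion.} We obtain $\sup(U_{A,\theta}-V)\le0$. Letting $A\to+\infty$ and then $\theta\to0$, pointwise in $\mu$, gives $U\le V$.
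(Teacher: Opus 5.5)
Your proposal follows the paper's proof step for step:
\begin{itemize}
\item the same $\hat U=(1-\theta)(U\wedge A)+\theta\tilde U$, shown to be a sub-solution by splitting off $\theta D_\mu\tilde U$ and using convexity of $H$;
\item the same doubling of variables, with a maximizer obtained from Corollary \ref{cor2};
\item the same test pairs built from $Q=\varepsilon^{-1}|X-Y|^{p-2}(X-Y)$ and the Stegall perturbations;
\item the same three-term estimate, with the perturbation removed before $\varepsilon\to0$, followed by $A\to+\infty$ and $\theta\to0$.
\end{itemize}
In places you are more careful than the paper. You apply Corollary \ref{cor2} on $\{M_p\le R\}^2$, since Stegall's lemma needs a bounded set, and you check that the maximizer is interior. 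You also write $\partial^+(U\wedge A)$ where the paper writes $\partial^+U$. You should record explicitly $\sup(U_{A,\theta}-V)\le U_{A,\theta}(\mu_*)-V(\nu_*)+4\kappa^{1/p'}R^{1/p}$. It follows from $|\mathcal I_p(\xi,\mu)|\le M_{p'}(\xi)^{1/p'}M_p(\mu)^{1/p}$, and the paper states the analogous bound.

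The step you flag is a genuine gap, and the paper does not fill it. Its proof handles $F(\mu_*)-F(\nu_*)$ only with the phrase \emph{using the continuity of $F$}. The point $(\mu_*,\nu_*)$ moves with $(\varepsilon,\kappa)$, and $\{M_p\le R\}$ is not $\mathcal W_p$-compact. So $\mathcal W_p(\mu_*,\nu_*)\to0$ plus pointwise continuity does not give $F(\mu_*)-F(\nu_*)\to0$, and uniform continuity on bounded sets cannot be derived from continuity.

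Your option (a) is the right repair, at the cost of an extra hypothesis. Suppose $F$ is uniformly continuous on $\{M_p\le R\}$ with modulus $\omega_{F,R}$. Then $F(\mu_*)-F(\nu_*)\le\omega_{F,R}(\mathcal W_p(\mu_*,\nu_*))$. With $M_\varepsilon=\sup\Phi$ one also has $\frac{1}{p\varepsilon}\mathcal W_p^p(\mu_*,\nu_*)\le M_{\varepsilon/2}-M_\varepsilon+4\kappa^{1/p'}R^{1/p}$. This tends to $0$ as $\kappa\to0$ and then $\varepsilon\to0$, because $M_\varepsilon$ is monotone and bounded. Your Step 2 supplies the needed $R$, independent of $\varepsilon$ and $\kappa$.

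Option (b) does not work. Strong exposedness concerns one fixed perturbed functional. For fixed $\varepsilon$, the points $(\mu_*,\nu_*)$ obtained as $\kappa\to0$ only form a maximizing sequence of the unperturbed $\Phi$, which need not have a strongly exposed maximum. Across different $\varepsilon$ there is no common functional at all. You could instead first perturb $U_{A,\theta}-V$ on the diagonal so that it has a strongly exposed maximizer $\bar\mu$. But proving $\mu_*\to\bar\mu$ then requires $V(\mu_*)-V(\nu_*)\to0$ (or the same for $U$), which is the same uniform continuity problem moved onto the solutions.

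So your argument, like the paper's, proves the theorem only under the additional assumption that $F$ is uniformly continuous on $\mathcal W_p$-bounded sets. For merely continuous $F$, neither argument closes.
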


\begin{proof}
Let $A \in \mathbb{R}$, $\theta \in (0, 1]$ and consider the function $\tilde{U}$ given by Lemma \ref{lemma:sub}. Let $\hat U$ be defined by $\hat U(\mu) = (1-\theta)(U \wedge A)(\mu) + \theta \tilde U(\mu)$. Observe that $\hat{U}$ is a viscosity sub-solution to \eqref{hjb2}. Indeed, for any $\gamma \in \partial^+ \hat U(\mu)$, $(\pi_1, \frac{\pi_2 - \theta D_\mu \tilde U(\mu)(\pi_1)}{1 - \theta}) \in \partial^+ U(\mu)$. Hence, using the convexity of $H$ in $p$, we deduce as in the proof of Theorem \ref{thm:min} that $\hat{U}$ is indeed a viscosity sub-solution to \eqref{hjb2}. The function $W$ defined by

\[
W(\mu, \nu) = \hat{U}(\mu) - V(\nu) - \frac{1}{p \varepsilon} \mathcal{W}_p^p(\mu, \nu)
\]
is such that $W(\mu,\nu) \to -\infty$ as $M_p(\mu) + M_p(\nu) \to + \infty$. Furthermore, it is bounded from above. Hence, thanks to Corollary \ref{cor2}, for any $\delta > 0$, there exist $\xi_1,\xi_2 \in \mathcal{P}_{p'}(\mathbb{R}^d)$, such that $\mathcal{M}_p'(\xi_i) \leq \delta$ for $i = 1,2$ and the map
\[
(\mu, \nu) \mapsto W(\mu, \nu) - \mathcal I_p(\xi_1, \mu) - \mathcal I_p(\xi_2, \nu)
\]
has a strongly exposed point of maximum on $\mathcal{P}_p(\mathbb{R}^d) \times \mathcal{P}_p(\mathbb{R}^d)$. Let $(\mu,\nu)$ be this point of maximum, $\gamma$ an associated optimal coupling for $\mathcal{W}_p(\mu, \nu)$, and $(X, Y, Z_1, Z_2)$ a set of random variables whose law is such that $\mathcal{L}(X, Y) = \gamma$, $\mathcal{L}(X, Z_1)$ is optimal for $\mathcal I_p(\xi, \mu)$ and $\mathcal{L}(Z_2, Y)$ is optimal for $\mathcal I_p(\xi_2, \nu)$. We then deduce, using the viscosity solution properties that
\[
\hat{U}(\mu) + \mathbb{E}\left[H\left[X, \frac{X-Y}{\varepsilon} |X - Y|^{p-2}  + Z_1\right]\right] \leq F(\mu)
\]
\[
V(\nu) + \mathbb{E}\left[H\left(Y, \frac{X - Y}{\varepsilon} |X - Y|^{p-2} - Z_2\right)\right] \geq F(\nu).
\]
Furthermore, by construction of $(\mu,\nu)$, there exists $C > 0$ which depends on $\hat{U}$ and $V$, but not on $\varepsilon$ or $\delta$ such that $\sup (\hat U - V) \leq \hat U(\mu) - V(\nu) + C (\delta)^\frac{1}{p'}$. This yields

\[
\begin{aligned}
\sup(\tilde{U} - V) \leq &\mathbb{E}\left[H\left(Y, \frac{X - Y}{\varepsilon} |X - Y|^{p-2} - Z_2\right) - H\left(X, \frac{X - Y}{\varepsilon} |X - Y|^{p-2} + Z_1\right)\right]\\
&+ F(\mu) - F(\nu)  + C (\delta)^\frac{1}{p'}
\end{aligned}
\]
We now use the regularity of the Hamiltonian to estimate, for $x, y, q, z_1, z_2 \in \mathbb{R}^d$
\[
\begin{aligned}
\left| H(y, q - z_2) - H(x, q + z_1) \right| \leq \,&\omega\left((1 + |q| + |z_1|) |x - y|\right) \\
&+ C\left(1 + |q| + |z_1| + |z_2|\right)^{p'-1}\left(|z_1| + |z_2|\right).
\end{aligned}
\]
Integrating yields

\[
\begin{aligned}
\sup (\hat{U} - V) \leq& C (\delta)^\frac{1}{p'} + F(\mu) - F(\nu) + \omega\left( \mathbb{E}\left[ \left(1 + \frac{|X - Y|^p}{\varepsilon}\right) \right] + \mathbb{E}\left[ |Z|^{p'} \right]^{\frac{1}{p'}} \mathbb{E}\left[ |X - Y|^p \right] \right)\\
&+ C \mathbb{E}\left[ |Z_1 + Z_2|^{p'} \right]^{\frac{1}{p'}} \mathbb{E}\left[ \left( 1 + \frac{|X - Y|^{p - 1}}{\varepsilon} + |Z_1| + |Z_2|^p \right) \right]^{1/p}
\end{aligned}
\]
Using that uniformly in $\delta$, $\mathbb{E}\left[ |X - Y|^p \right] = o(\varepsilon)$, we obtain passing first to the limit $\delta \to 0$, then $\varepsilon \to 0$, and using the continuity of $F$, that $\sup (\hat{U} - V) \leq 0$. Since $\theta \in (0, 1)$ and $A \in \mathbb{R}$ are arbitrary, we recover that $U \leq V$.
\end{proof}

\subsection{Extensions}

We briefly describe how to adapt the previous result to more general cases.

\subsubsection{Structure of the Hamiltonian}\label{sec:structH}
The convexity of the Hamiltonian is of course a crucial requirement for what we just presented, just as the growth up to the power $p'$, because of the mean field setting. Continuity requirements \eqref{eq:contH} are also fundamental to deal with various perturbations that we introduce to create point of maximum. Here, we comment the assumption \eqref{eq:H3} as well as the separated structure of \eqref{hjb2} compared to \eqref{hjb1}.\\

Condition \eqref{eq:H3} helps to conclude the proof of Lemma \ref{lemma:infA} despite the fact that constants are not sub-solutions to \eqref{hjb2} in general. This argument is crucial in our proof. Following \cite{ref4}, we can reduce more general cases to this one. 

For instance, if $H$ depends only on $p$, we can assume that the condition holds without loss of generality, by replacing $H(p)$ with $\tilde H(p) := H(p+p_0)$ where $p_0$ is a point of minimum of $H$. We then need the changes $\tilde U(\mu) := U(\mu) - \int_{\mathbb R^d}p_0\cdot x \mu(dx)$, $\tilde V(\mu) := V(\mu) - \int_{\mathbb R^d}p_0\cdot x \mu(dx)$ and $\tilde F(\mu) := F(\mu) - \int_{\mathbb R^d}p_0\cdot x \mu(dx)$.

 When $H$ depends on $x$ or on $\mu$, then assumptions, under which such generalizations are possible, exist but do not seem natural, so we do not comment on them further.\\

Under condition \eqref{eq:H3}, the fact that \eqref{hjb1} has been replaced by \eqref{hjb2} does not play a particular role, and we leave to the interested reader to verify that Theorem \ref{thm:last} holds under usual conditions for \eqref{hjb1}.

\subsubsection{Time dependent equations}

The study of HJB equations of the form

\begin{equation}\label{hjbt}
\partial_t U + \int_{\mathbb{R}^d} H(x, D_\mu U) \, \mu(dx) = F(\mu) \quad \text{in } \mathcal{P}_p(\mathbb{R}^d).
\end{equation}
 is in fact easier than the one we just presented. In this case, modulo the assumption $F \geq 0$, constants are sub-solutions because of the absence of zero order terms. We thus state without proof the following.

\begin{theorem}
Under the assumption of Theorem \ref{thm:last} on the Hamiltonian and $F \geq 0$, if $U$ and $V$ are continuous and respectively viscosity sub and super-solutions to \eqref{hjbt} such that $U$ is locally bounded from above and $\exists C > 0$, such that for all $t \geq 0, \mu \in \mathcal P_p(\R^d)$,
\[
U(t, \mu) \geq -C(1 + (M_p(\mu))^\frac1p), \quad V(t, \mu) \geq -C(1 + (M_p(\mu))^\frac1p),
\]
then $U- V \leq \|(U(0,\cdot) - V(0,\cdot))_+\|_\infty$.
\end{theorem}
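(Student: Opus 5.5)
We adapt the proof of Theorem \ref{thm:last} to the time-dependent setting, with the simplification noted in the text: when $F \geq 0$, any constant is a sub-solution to \eqref{hjbt}. Hence the truncation $U\wedge A$ is a sub-solution. We get this by repeating the argument of Lemma \ref{lemma:infA}, or directly that of Theorem \ref{thm:min} with $V\equiv A$, now with an extra time variable. Hypothesis \eqref{eq:H3} is not needed, since $A$ is a genuine sub-solution and convexity of $H$ plus Jensen suffice. We work on a finite horizon $[0,T]$ and set $m_0:=\|(U(0,\cdot)-V(0,\cdot))_+\|_\infty$, which we may assume finite. The goal is $U\leq V+m_0$ on $[0,T]\times\mathcal P_p(\R^d)$ for every $T$.

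Next we build a coercive strict sub-solution, mimicking Lemma \ref{lemma:sub}. Take $\tilde U(t,\mu):=-C^*-\lambda M_p(\mu)-Kt$. We have $D_\mu\tilde U=-\lambda p|x|^{p-2}x$, and \eqref{Hdef2} gives $\int H \leq C(1+(p\lambda)^{p'}M_p(\mu))$. Hence $\partial_t\tilde U+\int H\leq -K+C+C(p\lambda)^{p'}M_p(\mu)$, which is $\leq F$ only if the $M_p$ term is absorbed. So we choose instead $\tilde U(t,\mu)=-C^*-\lambda e^{\Lambda t}M_p(\mu)-Kt$. The time derivative then contributes $-\lambda\Lambda e^{\Lambda t}M_p(\mu)$, which dominates $C(p\lambda e^{\Lambda T})^{p'}M_p(\mu)$ for $\lambda$ small, given $\Lambda,T$. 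Taking $K>C$ absorbs the constant, using $F\geq0$.

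Set $\hat U:=(1-\theta)(U\wedge A)+\theta\tilde U$. Convexity of $H$ makes it a sub-solution, exactly as in Theorem \ref{thm:last}, and it satisfies the coercivity \eqref{coerc} uniformly in $t\in[0,T]$. To force the maximum away from $t=T$ and make the time inequality strict, we subtract $\eta/(T-t)$. We then consider
\[
W(t,s,\mu,\nu)=\hat U(t,\mu)-V(s,\nu)-\frac{1}{p\varepsilon}\mathcal W_p^p(\mu,\nu)-\frac{|t-s|^2}{2\varepsilon}-\frac{\eta}{T-t}.
\]
By the lower bound on $V$ and the coercivity of $\hat U$, this function is bounded above and tends to $-\infty$ at infinity in $M_p(\mu)+M_p(\nu)$. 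Corollary \ref{cor2} then yields $\xi_1,\xi_2$ with $M_{p'}(\xi_i)\leq\delta$ such that $W-\mathcal I_p(\xi_1,\mu)-\mathcal I_p(\xi_2,\nu)$ attains a strongly exposed maximum. The time variables lie in a compact set, so we apply the corollary on the measure variables only, with $(t,s)$ maximized beforehand; upper semicontinuity in $(\mu,\nu)$ of $\sup_{t,s}$ holds by compactness. This localization is the main technical point, as in the stationary case.

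There are two cases at the maximum point. If $t=0$ or $s=0$, then as $\varepsilon\to0$ the standard estimates ($|t-s|^2/\varepsilon\to0$ and $\mathbb E|X-Y|^p=o(\varepsilon)$, as in Lemma 3.1 of \cite{users}) together with continuity of $U$ and $V$ give $\sup W\leq m_0+o(1)$. Here we use $(1-\theta)(U\wedge A)+\theta\tilde U\leq U+\theta\,\mathrm{const}$ on the positive part. Otherwise both times are interior, and we write the sub- and super-solution inequalities with time derivatives $(t-s)/\varepsilon+\eta/(T-t)^2$ and $(t-s)/\varepsilon$. Subtracting removes the $(t-s)/\varepsilon$ terms and leaves $\eta/T^2\leq$ a Hamiltonian difference plus $F(\mu)-F(\nu)$. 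These are estimated exactly as in the proof of Theorem \ref{thm:last}, via \eqref{eq:contH2} and the growth of $D_qH$, so letting $\delta\to0$ and then $\varepsilon\to0$ gives a contradiction. We conclude $\hat U-V\leq m_0+o(1)$, and then let $\eta,\theta\to0$ and $A\to\infty$.

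The hardest step is making $\varepsilon\to0$ work in the Hamiltonian difference. We need the moments of $X$ to stay bounded uniformly in $\varepsilon,\delta$, and this relies on the coercivity of $\hat U$. We also need $F(\mu)-F(\nu)\to0$, which requires uniform continuity of $F$ on the localized bounded set. If mere continuity of $F$ is not enough there, we would first use strong exposedness to get $\mu,\nu$ converging along subsequences to a common limit, exactly as invoked at the end of Theorem \ref{thm:last}.
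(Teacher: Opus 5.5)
The paper states this theorem without proof, remarking only that constants are sub-solutions. Your architecture is the natural adaptation of Theorem \ref{thm:last}:
\begin{itemize}
\item truncate by a constant via the convexity argument of Theorem \ref{thm:min};
\item use the coercive sub-solution $-C^*-\lambda e^{\Lambda t}M_p(\mu)-Kt$, whose exponential correctly replaces the zero-order term used in Lemma \ref{lemma:sub};
\item double in $(t,\mu)$ with the $\eta/(T-t)$ penalty;
\item apply Corollary \ref{cor2} to the u.s.c. function $\sup_{t,s}W$.
\end{itemize}

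\textbf{The gap.} It lies in the passage $\varepsilon\to0$ at the maximum points. In the case $\bar t=0$ or $\bar s=0$ you need something like $V(0,\bar\mu_\varepsilon)-V(\bar s_\varepsilon,\bar\nu_\varepsilon)\to0$. In the interior case you need $F(\bar\mu_\varepsilon)-F(\bar\nu_\varepsilon)\to0$. All you know is that $\bar s_\varepsilon\to0$, that $\mathcal W_p(\bar\mu_\varepsilon,\bar\nu_\varepsilon)\to0$, and that the points stay in a fixed bounded set.

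Bounded sets of $\mathcal P_p(\R^d)$ are not $\mathcal W_p$-compact. For example, $(1-2^{-k})\delta_0+2^{-k}\delta_{2^{k/p}e_1}$ has $M_p=1$ and pairwise $\mathcal W_p^p$-distances at least $1/2$. So a merely continuous function can oscillate there, and pointwise continuity of $U$, $V$, $F$ does not give these limits.

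Your proposed repair does not work. Strong exposedness controls maximizing sequences of \emph{one fixed} perturbed functional, as in the last lines of the proof of Theorem \ref{thm:stegallnon}. Your points $(\bar\mu_\varepsilon,\bar\nu_\varepsilon)$ maximize functionals that change with $\varepsilon$, and with the Stegall perturbations $\xi_i$ depending on $(\varepsilon,\delta)$, so nothing forces them to converge. The proof of Theorem \ref{thm:last} does not use such an argument either: it only invokes ``continuity of $F$'', which has the same defect.

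To close the argument you must assume explicit uniform continuity on bounded sets:
\begin{itemize}
\item a modulus for $F$ on $\{M_p\le R\}$;
\item for the initial-time case, uniform continuity of $U(0,\cdot)$ or $V(0,\cdot)$ on $\{M_p\le R\}$, together with $\sup_{M_p(\nu)\le R}\big(|U(s,\nu)-U(0,\nu)|+|V(s,\nu)-V(0,\nu)|\big)\to0$ as $s\to0$.
\end{itemize}
With these, the rest of your argument goes through: the moment bounds are uniform in $\varepsilon\le1$, $\frac1\varepsilon\mathbb E|X-Y|^p\to0$, and the Hamiltonian terms vanish.

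\textbf{Smaller points.}
\begin{itemize}
\item A constant is a sub-solution of \eqref{hjbt} if and only if $\int H(x,0)\,\mu(dx)\le F(\mu)$, for instance when $H(\cdot,0)\le0$. This is not among the hypotheses; the paper's remark uses the same tacit normalization, so you should state it.
\item Of your two routes to ``$U\wedge A$ is a sub-solution'', only the Theorem \ref{thm:min} route survives. In the Lemma \ref{lemma:infA} argument, the rescaling by $1/\partial_uF_\delta$ also multiplies the time component $a$ of the super-jet. This leaves an error $a(1-1/\partial_uF_\delta)$, which is positive when $a<0$, so \eqref{eq:H3} no longer absorbs it.
\item Let $s$ range over $[0,2T]$ rather than $[0,T]$. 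Otherwise the super-solution test is not legitimate when $\bar s=T$.
\end{itemize}
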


\section*{Acknowledgements}
The authors are supported by the Chair FDD (Institut Louis Bachelier). The first author is supported by the ERC through the project PaDiESeM and by the Lagrange Mathematics and Computing Research Center.

\bigskip
\noindent$^1$: CEREMADE, CNRS UMR7534, Universit\'e Paris Dauphine-PSL.\\
$^2$: Coll\`ege de France.

\end{document}